\newtheorem{theorem}{Theorem}[section]
\newtheorem{maintheorem}{Theorem}
\newtheorem{lemma}[theorem]{Lemma}
\newtheorem{maincorollary}{Corollary}
\newtheorem{definition}[theorem]{Definition}
\theoremstyle{definition}
\begin{document}

\title{On finite $ca$-$\mathfrak F$ groups and their applications}

\author{Evgeniy N. Myslovets, Alexander F. Vasil'ev}

\address{Francisk Scorina Gomel State University, Mathematical 
Department, Sovetskaya street, 104, Gomel, 246019, 
Belarus}

\email{myslovets@gmail.com}
\email{formation56@mail.ru}

\subjclass[2010]{20D40, 20F17} 

\keywords{$ca$-$\mathfrak F$-group, composition formation, mutually permutable products}

\begin{abstract}
Let $\mathfrak F$ be a class of groups. A group $G$ is called
$ca$-$\mathfrak F$-group if its every non-abelian chief factor is simple
and $H/K \leftthreetimes C_G(H/K) \in \mathfrak F$ for every abelian chief
factor $H/K$ of $G$. In this paper, we investigate the structure of a finite
 $ca$-$\mathfrak F$-group. Properties of mutually permutable products of finite
$ca$-$\mathfrak F$-groups are studied.
\end{abstract}

\maketitle

\section{Introduction}

Only finite groups are considered. The concept of composition formation was
introduced by L.A.~Shemetkov~\cite{shem} and R.~Baer in an unpublished paper
(noted in~\cite[IV, p.\,370]{dh}). Every saturated formation is a composition 
formation. The class of all quasinilpotent groups is an example of composition,
but not saturated formation. Guo~Wenbin and
A.\,N.~Skiba~\cite{guo-skiba, guo-skiba-2} introduced the concept of 
qausi-$\mathfrak F$-group that is a generalization of quasinilpotency. 
In~\cite{guo-skiba} they proved that the class of all qausi-$\mathfrak F$-groups is
a composition formation if $\mathfrak F$ is a saturated formation containing all
nilpotent groups. In~\cite{guo-skiba, guo-skiba-2} some applications of 
formations of qausi-$\mathfrak F$-groups were considered.

In~\cite{ved} V.A.~Vedernikov introduced the definition of a $c$-supersoluble group.
Recall~\cite{ved} that a group $G$ is called $c$-supersoluble if every chief 
factor of $G$ is simple. In~\cite{vas-vas} A.F.~Vasil'ev and T.I.~Vasil'eva proved
that the class $\mathfrak U_{\mathrm{c}}$ of all $c$-supersoluble groups is a
composition but not a saturated formation. D.~Robinson (using notation: 
$SC$-group)~\cite{rob} established the structural properties of finite 
$c$-supersoluble groups.

In~\cite{mysl} the following generalization of $c$-supersolubility was 
proposed.

Let $\mathfrak F$ be a class of groups. Recall~\cite{shem-sk} that a chief 
factor $H/K$ of group $G$ is called {\it $\mathfrak F$-central} provided
$H/K \leftthreetimes G/C_G(H/K) \in \mathfrak F$.

\begin{definition}[\cite{mysl}]
\label{def:fca}
Let $\mathfrak F$ be a class of groups. A group $G$ is called a
$ca$-$\mathfrak F$-group if its every non-abelian chief factor is simple
and every abelian chief factor of $G$ is $\mathfrak F$-central. 
\end{definition}

The class of all $ca$-$\mathfrak F$-groups is denoted by $\mathfrak F_{ca}$.
If $\mathfrak F = \mathfrak U$ we have that $\mathfrak F_{ca} = \mathfrak U_c$.
 If $\mathfrak F = \mathfrak N \mathfrak A$, 
then $\mathfrak F_{ca} = (\mathfrak N \mathfrak A)_{ca}$ is the class 
of all groups whose every non-abelian chief factor is simple and
$Aut_G(H/K)$ is abelian for every abelian chief factor $H/K$. If
$\mathfrak F = \mathfrak G$ then $\mathfrak F_{ca}$ is the
class of all SNAC-groups~\cite{rob}, i.e the class of all groups whose all 
non-abelian factors are simple.

The class of all $ca$-$\mathfrak F$-groups is a composition 
formation~\cite{mysl}. Also in~\cite{mysl} some properties of the products of normal 
$ca$-$\mathfrak F$-subgroups were found.

Recall~\cite[\S 8]{shem-sk}, $Z^{\mathfrak F}_{\infty}(G)$ denotes
the $\mathfrak F$-hypercenter of a group $G$. $Z^{\mathfrak F}_{\infty}(G)$ is
 the product of all normal
subgroups $H$ of $G$ whose $G$-chief factors are $\mathfrak F$-central.

The following theorem is an extension of Robinson's result~\cite{rob} for
case when $\mathfrak F$ is a soluble saturated formation.

\begin{maintheorem}
\label{theorem:A}
Let $\mathfrak F$ be a soluble saturated formation. A 
group $G$ is a $ca$-$\mathfrak F$-group if and only if $G$ satisfies:

1. $G^{\mathfrak S} = G^{\mathfrak F}$;

2. If $G^{\mathfrak S} \neq 1$ then $G^{\mathfrak S} / Z(G^{\mathfrak S})$
is a direct product of $G$-invariant non-abelian simple groups; 

3. $Z(G^{\mathfrak S}) \subseteq Z^{\mathfrak F}_{\infty}(G)$
\end{maintheorem}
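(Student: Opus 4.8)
The plan is to study the soluble residual $D=G^{\mathfrak S}$ through a $G$-chief series refining $1\le Z(D)\le D\le G$, exploiting two facts. First, $D$ is perfect: $G/D'$ is an extension of the abelian group $D/D'$ by the soluble group $G/D$, hence soluble, so minimality of the residual forces $D=D'$. Second, the local remark: if $H/K$ is an \emph{abelian} $G$-chief factor with $H\le D$, then $\mathfrak F$-centrality yields $G/C_G(H/K)\in\mathfrak F\subseteq\mathfrak S$, and therefore $D$ centralises $H/K$.

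For necessity, assume $G\in\mathfrak F_{ca}$. Condition~1 follows since $\mathfrak F\subseteq\mathfrak S$ gives $D\subseteq G^{\mathfrak F}$, while $G/D$ is soluble and its chief factors, being abelian $G$-chief factors, are $\mathfrak F$-central, so $Z^{\mathfrak F}_{\infty}(G/D)=G/D$ and $G/D\in\mathfrak F$ because $\mathfrak F$ is saturated. Assume now $D\ne1$. Refining $1\le F(D)$ to a $G$-chief series, every factor is abelian (being inside the nilpotent $F(D)$) and hence centralised by $D$, so $D$ stabilises the series and $F(D)\le Z_{\infty}(D)$; since $D$ is perfect, $Z_2(D)=Z(D)$, so $D/Z(D)$ is centreless, and together these force $F(D)=Z(D)$. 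Then $F^{*}(D)=Z(D)E(D)$ and $C_D(F^{*}(D))\le F^{*}(D)$ collapses to $C_D(E(D))=Z(D)$. Writing $Q_1,\dots,Q_k$ for the components of $D$, the group $E(D)Z(D)/Z(D)\cong\prod_i Q_i/Z(Q_i)$ is a direct product of non-abelian simple groups; $G/Z(D)$ permutes these, and a non-trivial orbit would produce a non-abelian, non-simple $G$-chief factor inside $D$, contrary to hypothesis, so each $Q_i=(Q_iZ(D))'$ is normal in $G$. Hence $D$ normalises every component, so $D/E(D)Z(D)$ embeds into $\prod_i\mathrm{Out}(Q_i)$, which is soluble by Schreier's conjecture; being also perfect this quotient is trivial, so $D=E(D)Z(D)$ and $D/Z(D)=\prod_i Q_i/Z(Q_i)$, a direct product of $G$-invariant non-abelian simple groups --- condition~2. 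Condition~3 is immediate: all $G$-chief factors below $Z(D)\le D$ are abelian, hence $\mathfrak F$-central, so $Z(D)\le Z^{\mathfrak F}_{\infty}(G)$ by the definition of the $\mathfrak F$-hypercenter.

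For sufficiency, assume conditions 1--3; if $D=1$ then condition~1 gives $G\in\mathfrak F\subseteq\mathfrak F_{ca}$, so suppose $D\ne1$. In a $G$-chief series refining $1\le Z(D)\le D\le G$, the factors below $Z(D)\le Z^{\mathfrak F}_{\infty}(G)$ are abelian and $\mathfrak F$-central, the factors between $Z(D)$ and $D$ can be chosen to be the $G$-invariant simple direct factors of condition~2 (all simple non-abelian), and the factors above $D=G^{\mathfrak F}$ are $G$-chief factors of $G/D\in\mathfrak F$, hence abelian and $\mathfrak F$-central by saturation. Since by Jordan--H\"older the $G$-isomorphism types of the chief factors of $G$ --- and therefore their being abelian, simple, or $\mathfrak F$-central --- are independent of the series, every non-abelian $G$-chief factor of $G$ is simple and every abelian one is $\mathfrak F$-central, i.e.\ $G\in\mathfrak F_{ca}$.

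The main obstacle is expected to be the step $D=E(D)Z(D)$: it is here that the hypothesis "non-abelian chief factors are simple" (forcing $G$-invariance of the components) must be played off against Schreier's conjecture and the perfectness of $D$. The identities $Z_2(D)=Z(D)$ and especially $F(D)=Z(D)$ are the other delicate points, the latter resting on the commutator calculus that converts "$D$ stabilises a chief series inside $F(D)$" into "$F(D)\le Z_{\infty}(D)$".
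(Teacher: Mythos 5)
Your proof is correct, and your sufficiency direction and your treatment of conditions 1 and 3 essentially coincide with the paper's. The real divergence is in the necessity of condition 2. The paper first reduces to the case $Z(D)=1$ (handling $Z(D)\neq 1$ by passing to $G/Z(D)$ and using its Lemma~3.1 to show the new centre is trivial), and then peels off minimal normal subgroups of $G$ inside $D$ one at a time: each is non-abelian simple, Schreier's conjecture gives $D\le N_iC_G(N_i)$, and iteration with the Dedekind identity yields $D=N_1\times\cdots\times N_t$. You instead work with $D$ directly via the generalized Fitting subgroup: perfectness of $D$, the stability argument giving $F(D)\le Z_{\infty}(D)$, Gr\"un's lemma $Z_2(D)=Z(D)$ to force $F(D)=Z(D)$, then $C_D(E(D))=Z(D)$, $G$-invariance of the components extracted from the ``non-abelian chief factors are simple'' hypothesis via the orbit argument, and a single application of Schreier's conjecture to $\prod_i\mathrm{Out}(Q_i)$ combined with perfectness to conclude $D=E(D)Z(D)$. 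Both routes rest on Schreier's conjecture; yours trades the paper's induction and the reduction modulo $Z(D)$ for the $F^{*}$ machinery and Gr\"un's lemma, and in particular replaces the paper's Lemma~3.1 (which invokes $G^{\mathfrak F}\le C_G(Z^{\mathfrak F}_{\infty}(G))$) by the direct observation that $D=G^{\mathfrak S}$ centralises every abelian chief factor of a $ca$-$\mathfrak F$-group because $G/C_G(H/K)\in\mathfrak F\subseteq\mathfrak S$. Two small points you leave implicit but which are easily supplied: that $E(D)\neq 1$ when $D\neq 1$ (otherwise $D=F^{*}(D)=Z(D)$ would be abelian, contradicting $D$ perfect and nontrivial), and that the kernel of the map $D\to\prod_i\mathrm{Out}(Q_i)$ is exactly $E(D)Z(D)$ (an element acting by inner automorphisms on every component differs from a product of component elements by an element of $C_D(E(D))=Z(D)$).
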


Following Carocca~\cite{car}, we say that $G = HK$ is the mutually
permutable product of subgroups $H$ and $K$ if $H$ permutes with every
subgroup of $K$ and $K$ permutes with every subgroup of $H$. The mutually
permutable products of supersoluble and $c$-supersoluble subgroups were 
investigated in many works of different authors (see~monograph~\cite{bal}).
A lot of papers were dedicated to the case where $G = HK$ is the mutually
permutable product of subgroups $H$ and $K$ which belong to a saturated
formation $\mathfrak F$. Therefore we have the following problem.

\textbf{Problem.} Lets $\mathfrak F$ be a composition formation. What is the 
structure of the group $G = HK$ where $H$ and $K$ are mutually permutable 
$\mathfrak F$-subgroups of $G$.

In this paper this problem is solving for a formation of $ca$-$\mathfrak F$-groups
where $\mathfrak F$ is a saturated formation contaning all supersoluble groups.

\begin{maintheorem}
\label{theorem:B}
Let $\mathfrak F$ be a saturated formation containing the
class $\mathfrak U$ of supersoluble groups. Let a group $G = HK$ be the product 
of the mutually permutable subgroups $H$ and $K$ of $G$. If $G$ is a 
$ca$-$\mathfrak F$-group then both $H$ and $K$ are $ca$-$\mathfrak F$-groups.
\end{maintheorem}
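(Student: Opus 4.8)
The plan is to argue by induction on $|G|$, using the characterization of $ca$-$\mathfrak F$-groups via their chief factors together with the standard structural machinery for mutually permutable products. Since the class $\mathfrak F_{ca}$ is a composition formation, it is in particular closed under taking quotients; thus every quotient of $G$ is again a $ca$-$\mathfrak F$-group, and it suffices to show $H \in \mathfrak F_{ca}$ (the argument for $K$ is symmetric). First I would dispose of the trivial cases: if $H = G$ there is nothing to prove, and if $H = 1$ then $H \in \mathfrak F_{ca}$ vacuously. So assume $1 \neq H \neq G$, and likewise for $K$.

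The key tool is the following well-known fact about mutually permutable products (see the monograph \cite{bal}): if $G = HK$ is a mutually permutable product and $N$ is a minimal normal subgroup of $G$, then $G/N = (HN/N)(KN/N)$ is again a mutually permutable product, and moreover $N \cap H$ is normalized by $K$ (and $N \cap K$ by $H$), so that $N \cap H$ and $N \cap K$ are subnormal, indeed normal-ish, in $G$. The first main step is to pick a minimal normal subgroup $N$ of $G$ and pass to $G/N$. Since $G/N$ is a $ca$-$\mathfrak F$-group and $G/N = (HN/N)(KN/N)$ is a mutually permutable product of smaller order, induction gives $HN/N \cong H/(H\cap N) \in \mathfrak F_{ca}$. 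Thus $H/(H \cap N)$ is a $ca$-$\mathfrak F$-group, and it remains to control the chief factors of $H$ that lie inside $H \cap N$; equivalently, it suffices to show that every chief factor of $H$ below $H \cap N$ is either non-abelian simple or $\mathfrak F$-central in $H$.

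The heart of the argument is therefore the analysis of $H \cap N$, and this splits into the two cases dictated by the definition. If $N$ is non-abelian, then since $G$ is a $ca$-$\mathfrak F$-group $N$ is (a direct product of $G$-conjugate copies of, hence by minimality) a simple group; here I would invoke the structure theory of mutually permutable products with a simple (or more generally characteristically simple) factor to show that $H \cap N$ is either trivial, all of $N$, or a single simple direct factor — in any case every chief factor of $H$ inside $H \cap N$ is non-abelian simple, which is what we need. If $N$ is abelian, say a $p$-group, then $N \leftthreetimes G/C_G(N) \in \mathfrak F$ by $\mathfrak F$-centrality; since $H \cap N \trianglelefteq H$ (using that $K$ normalizes $H \cap N$ and $H$ normalizes it trivially) and $H$ acts on $H \cap N$ as a section of the action of $G$ on $N$, the fact that $\mathfrak F$ is a formation containing $\mathfrak U$ (so closed under the relevant sections and subgroups-of-direct-products operations) forces every $H$-chief factor inside $H \cap N$ to be $\mathfrak F$-central in $H$. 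Here the hypothesis $\mathfrak U \subseteq \mathfrak F$ is used to guarantee that cyclic (order-$p$) chief factors are automatically $\mathfrak F$-central, which is what makes the restriction to a subgroup behave well.

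The step I expect to be the main obstacle is the abelian case: showing that $\mathfrak F$-centrality of $N$ as a $G$-chief factor descends to $\mathfrak F$-centrality of the $H$-composition factors of $H \cap N$. The subtlety is that $H \cap N$ need not be $H$-irreducible, so one must refine it to an $H$-chief series and check each factor; the action of $H$ on such a factor $U/V$ is a quotient of a subgroup of $\mathrm{Aut}_G(N)$, and one must verify that $U/V \leftthreetimes H/C_H(U/V)$ lands in $\mathfrak F$. This requires knowing that $\mathfrak F$ is closed under the operation $D_0$ and under taking these particular sections — which holds because $\mathfrak F$ is a saturated formation — together with the observation that any order-$p$ chief factor contributes a supersoluble, hence $\mathfrak F$, semidirect product. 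I would handle this by reducing, via mutual permutability, to the situation where $H \cap N$ is itself $H$-irreducible (using that $K$-invariance of $H \cap N$ plus minimality of $N$ tightly constrains the possibilities), so that the descent becomes a direct application of the formation properties of $\mathfrak F$.
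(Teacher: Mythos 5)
Your reduction (induction on $|G|$, passing to $G/N$ via Lemma~\ref{lem:factor}, then splitting on whether the minimal normal subgroup $N$ is abelian) is the same skeleton as the paper's, and your non-abelian case matches the published argument almost exactly: since $G\in\mathfrak F_{ca}$, a non-abelian minimal normal $N$ is already simple (no ``direct product of conjugate copies'' is needed), Lemma~\ref{lem:lemma_4.3.3} forces $H\cap N\in\{1,N\}$, and in either case $H$ is recovered from $H/(H\cap N)\in\mathfrak F_{ca}$ by Jordan--H\"older. (It is worth noting that the paper's own proof of Theorem~\ref{theorem:B} stops after the non-abelian case and never writes out the abelian one, so on this point you have attempted more than the published text.)

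The genuine gap is in your abelian case, exactly where you yourself locate the main obstacle. The assertion that ``$\mathfrak F$ is a formation containing $\mathfrak U$, so closed under the relevant sections and subgroups-of-direct-products operations,'' and that this forces the $H$-chief factors of $H\cap N$ to be $\mathfrak F$-central in $H$, is not valid: formations, even saturated ones, are closed under quotients and subdirect products but not under subgroups or sections, and the canonical local definition $f(p)$ need not be subgroup-closed either. So from $G/C_G(N)\in f(p)$ one cannot conclude $H/C_H(U/V)\in f(p)$ merely because the latter is a quotient of a subgroup of the former, and reducing to the case where $H\cap N$ is $H$-irreducible does nothing to repair this. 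The transfer of $\mathfrak F$-centrality has to be engineered group-theoretically, with the tools the paper uses in the abelian half of the proof of Theorem~\ref{theorem:C}: when $K\cap N=1$, Lemma~\ref{lem:lemma_4.3.3}(5) gives either $N\le C_G(H)$, so $N\le Z(H)$ and its $H$-chief factors are $\mathfrak F$-central because $\mathfrak U\subseteq\mathfrak F$, or $N\le C_G(K)$, so $G/C_G(N)\cong H/C_H(N)$ and Lemmas~\ref{lem:lemma1} and~\ref{lem:equal} (via the maximal inner local function) finish the descent. The remaining sub-case $N\le H\cap K$ is covered neither by your sketch nor by the paper and requires a separate argument; as written, your proposal does not close it.
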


\begin{maincorollary}[\cite{bal2}]
Let the group $G = HK$ be the mutually permutable product of the subgroups $H$
and $K$ of $G$. If $G$ is a $c$-supersoluble group then both $H$ and $K$ are
$c$-supersoluble groups.
\end{maincorollary}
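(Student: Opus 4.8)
The plan is to obtain this statement as a direct specialization of Theorem~\ref{theorem:B}. First I would recall the two facts from the introduction that make the reduction work: the class $\mathfrak U$ of supersoluble groups is a saturated formation (Huppert's classical result; $\mathfrak U$ is locally defined, hence saturated), and for $\mathfrak F = \mathfrak U$ one has $\mathfrak U_{ca} = \mathfrak U_c$, so that ``$ca$-$\mathfrak U$-group'' and ``$c$-supersoluble group'' are synonymous.

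With these identifications in hand the argument is immediate. Since $\mathfrak U$ trivially contains $\mathfrak U$, the class $\mathfrak F = \mathfrak U$ satisfies the hypotheses of Theorem~\ref{theorem:B}. Thus, if $G = HK$ is the mutually permutable product of $H$ and $K$ and $G$ is $c$-supersoluble, i.e. $G \in \mathfrak U_c = \mathfrak U_{ca}$, then Theorem~\ref{theorem:B} yields $H \in \mathfrak U_{ca}$ and $K \in \mathfrak U_{ca}$, that is, both $H$ and $K$ are $c$-supersoluble. No induction, no case analysis and no further lemmas are required at this stage.

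Consequently there is no genuine obstacle inside the proof of the corollary itself; all of the difficulty lies upstream, in Theorem~\ref{theorem:B}. That is where one must control how mutual permutability of $H$ and $K$ transfers structural information about the chief factors of $G$ down to $H$ and $K$: namely, that a non-abelian chief factor of $H$ (resp. $K$) is simple, and that an abelian chief factor of $H$ (resp. $K$) is $\mathfrak F$-central, deduced from the corresponding data in $G$. The hypothesis $\mathfrak U \subseteq \mathfrak F$ in Theorem~\ref{theorem:B}, and hence implicitly here, presumably serves to guarantee enough supersoluble-type behaviour (e.g. prime-order abelian chief factors) to run the standard arguments on mutually permutable products collected in~\cite{bal}.
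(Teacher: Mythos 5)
Your proposal is correct and matches the paper's intent exactly: the corollary is stated as an immediate specialization of Theorem~\ref{theorem:B} to $\mathfrak F = \mathfrak U$, using that $\mathfrak U$ is a saturated formation containing itself and that $\mathfrak U_{ca} = \mathfrak U_c$, as noted in the introduction. No further argument is given or needed in the paper.
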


\begin{maincorollary}[\cite{beid-hein}]
Let the group $G = HK$ be the mutually permutable product of the subgroups $H$
and $K$ of $G$. If $G$ is a SNAC-group then both $H$ and $K$ are SNAC-groups.
\end{maincorollary}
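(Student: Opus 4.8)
The plan is to argue by induction on $|G|$, combining the fact (from~\cite{mysl}) that $\mathfrak F_{ca}$ is a composition formation — hence $Q$-closed, $R_{0}$-closed, and verifiable chief factor by chief factor along any chief series — with the standard machinery for mutually permutable products collected in~\cite[Ch.~4]{bal}: a quotient $G/N=(HN/N)(KN/N)$ is again mutually permutable, and there are strong restrictions on how a minimal normal subgroup of $G$ may meet the factors $H$ and $K$. By the symmetry of the hypothesis it is enough to prove that $H$ is a $ca$-$\mathfrak F$-group.

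I would first isolate two remarks. (a) Since $\mathfrak U\subseteq\mathfrak F$, every chief factor $A/B$ of prime order of an arbitrary group $X$ is $\mathfrak F$-central in $X$: here $X/C_X(A/B)$ embeds into a cyclic group, so $A/B\leftthreetimes X/C_X(A/B)$ is metacyclic, hence supersoluble, hence in $\mathfrak F$. Thus, to obtain $H\in\mathfrak F_{ca}$ it suffices to check that the non-abelian chief factors of $H$ are simple and that the abelian chief factors of $H$ of non-prime order are $\mathfrak F$-central. (b) As $\mathfrak F$ is saturated, $\mathfrak F=\mathrm{LF}(F)$ for its canonical local definition $F$; for an abelian $p$-chief factor $A/B$ of $X$, being $\mathfrak F$-central means $X/C_X(A/B)\in F(p)$, and $F(p)$ contains every abelian group of exponent dividing $p-1$ (a reflection of $\mathfrak U\subseteq\mathfrak F$). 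Moreover, if $V$ is an abelian normal subgroup of $X$ with $V\leftthreetimes X/C_X(V)\in\mathfrak F$, then every $X$-chief factor inside $V$ is $\mathfrak F$-central in $X$ (because $X/C_X(V)\in F(p)$ and $F(p)$ is image-closed).

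Now take a minimal normal subgroup $N$ of $G$. Then $G/N\in\mathfrak F_{ca}$ is the mutually permutable product of $HN/N$ and $KN/N$, so by induction $HN/N\cong H/(H\cap N)$ is a $ca$-$\mathfrak F$-group. If $N\cap H=1$ we are done, so assume $N\cap H\neq1$. Since $\mathfrak F_{ca}$ is a composition formation and the action of $H$ on any chief factor above $H\cap N$ factors through $H/(H\cap N)$, those chief factors already have the required form, and it remains only to control the $H$-chief factors inside $H\cap N$. If $N$ is non-abelian, then, being a chief factor of the $ca$-$\mathfrak F$-group $G$, it is simple, and by the structure theory of mutually permutable products it lies in one factor while meeting the other trivially; as $N\cap H\neq1$ this forces $N\le H$, so $N$ is the unique $H$-chief factor inside $H\cap N$ and it is simple — hence $H\in\mathfrak F_{ca}$. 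So we may assume $N$ is an elementary abelian $p$-group with $N\cap H\neq1$. Here I would use mutual permutability once more: if $N\le H$ and $N\cap K=1$, then every subgroup $U$ of $N$ lies in $H$, hence permutes with $K$, so $N\cap UK=U(N\cap K)=U$ is normal in $UK$ and normalised by $K$; therefore $K$ normalises every subgroup of $N$, i.e.\ $K$ acts on $N$ by scalars, $N$ is $H$-irreducible, and $G/C_G(N)=\bigl(H/C_H(N)\bigr)Z_{0}$ with $Z_{0}$ a central cyclic subgroup of exponent dividing $p-1$. Since $N$ is $\mathfrak F$-central in $G$ we have $G/C_G(N)\in F(p)$, and one then passes to the normal subgroup $H/C_H(N)$ — whose cokernel is carried by the central cyclic group $Z_{0}\in F(p)$ — to conclude $H/C_H(N)\in F(p)$, i.e.\ $N$ is $\mathfrak F$-central in $H$; together with $H/N\in\mathfrak F_{ca}$ this gives $H\in\mathfrak F_{ca}$. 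In the remaining configurations ($N\le H\cap K$, or $N\cap H$ a proper non-trivial subgroup of $N$) a parallel analysis shows that, outside a $G$-invariant "core" part of $N$, the group $H$ again acts by scalars, so the corresponding $H$-chief factors are of prime order and $\mathfrak F$-central by remark~(a); the only genuinely new $H$-chief factors lie in the core part, which is $\mathfrak F$-central in $G$ and has to be transferred to $H$.

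The main obstacle is exactly this transfer of $\mathfrak F$-centrality from $G$ down to $H$ across a central cyclic "scalar" discrepancy: one must upgrade a statement of the form $G/C_G(A/B)\in F(p)$ to $H/C_H(A/B)\in F(p)$, which requires a careful inheritance property of the local definition $F$ of the saturated formation $\mathfrak F$ — one that should be extracted from $\mathfrak U\subseteq\mathfrak F$ together with the very restricted way in which the factors sit inside $G$ — and the configuration $N\le H\cap K$, where the scalar argument is unavailable in its clean form, will have to be handled through the behaviour of the cores $\mathrm{Core}_G(H)$ and $\mathrm{Core}_G(K)$ in mutually permutable products. Once $H\in\mathfrak F_{ca}$ is established, $K\in\mathfrak F_{ca}$ follows by symmetry, and the stated corollaries are the specialisations $\mathfrak F=\mathfrak U$ and $\mathfrak F=\mathfrak G$.
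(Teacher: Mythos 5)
Your overall strategy is the paper's: the corollary is the case $\mathfrak F=\mathfrak G$ of Theorem~\ref{theorem:B}, which the paper proves by taking a minimal counterexample, passing to $G/N$ for a minimal normal subgroup $N$ (using Lemma~\ref{lem:factor}), and then analysing $H\cap N$ and $K\cap N$ via Lemma~\ref{lem:lemma_4.3.3}; your treatment of non-abelian $N$ coincides with the paper's. The difficulty is that, as written, your argument does not terminate: you explicitly leave unresolved the transfer of $\mathfrak F$-centrality from $G$ down to $H$ when $N$ is abelian (the step $G/C_G(A/B)\in F(p)\Rightarrow H/C_H(A/B)\in F(p)$, which is indeed problematic since a local definition $F(p)$ need not be closed under normal subgroups), as well as the configuration $N\le H\cap K$. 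For the statement actually being proved this entire branch is vacuous, and you should say so: here $\mathfrak F=\mathfrak G$, so $A/B\leftthreetimes X/C_X(A/B)\in\mathfrak G$ for \emph{every} abelian chief factor of \emph{every} group, i.e.\ a SNAC-group is characterised solely by the simplicity of its non-abelian chief factors. Hence once $H/(H\cap N)\in\mathfrak G_{ca}$ by induction, the only thing to verify inside $H\cap N$ is simplicity of non-abelian $H$-chief factors, which your non-abelian case already delivers (when $N$ is abelian there is nothing to check at all). Without this observation your proposal is a gapped proof of the general Theorem~\ref{theorem:B} rather than a proof of the corollary. Two smaller corrections: part~4 of Lemma~\ref{lem:lemma_4.3.3} gives $H\cap N\in\{1,N\}$ for any minimal normal $N$, so the configuration ``$N\cap H$ a proper non-trivial subgroup of $N$'' that you propose to handle separately never occurs; and a non-abelian minimal normal subgroup may well lie in both $H$ and $K$ (it need not ``meet the other trivially''), though this does not affect your argument, since $N\le H$ is all you use.
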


\begin{maincorollary}
Let the group $G = HK$ be the mutually permutable product of the subgroups $H$
and $K$ of $G$. If $G$ is a $ca$-$\mathfrak N \mathfrak A$-group then both $H$ and $K$ 
are $ca$-$\mathfrak N \mathfrak A$-groups.
\end{maincorollary}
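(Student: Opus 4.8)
The plan is to deduce this corollary directly from Theorem~\ref{theorem:B}, so the whole task reduces to checking that the formation $\mathfrak N\mathfrak A$ satisfies the hypotheses imposed there, i.e.\ that $\mathfrak N\mathfrak A$ is a saturated formation and that $\mathfrak U\subseteq\mathfrak N\mathfrak A$. First I would record the usual working description $\mathfrak N\mathfrak A=\{\,G: G' \text{ is nilpotent}\,\}$: a group $G$ lies in $\mathfrak N\mathfrak A$ iff it has a normal nilpotent subgroup $N$ with $G/N$ abelian, which forces $G'\le N$ and hence $G'$ nilpotent; conversely one takes $N=G'$. From this description it is immediate that $\mathfrak N\mathfrak A$ is a formation, being the formation product of $\mathfrak N$ and $\mathfrak A$; alternatively one checks closure under epimorphic images, via $(G/M)'=G'M/M$, and under subdirect products, since if $G'M_i/M_i$ is nilpotent for $i=1,2$ and $M_1\cap M_2=1$ then $G'$ embeds into a direct product of nilpotent groups.

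Next I would verify saturation. Suppose $G/\Phi(G)\in\mathfrak N\mathfrak A$; I must show $G\in\mathfrak N\mathfrak A$, that is, that $G'$ is nilpotent. By the description above, $G'\Phi(G)/\Phi(G)\cong G'/(G'\cap\Phi(G))$ is nilpotent. Since $G'\trianglelefteq G$, Gasch\"utz's theorem gives $G'\cap\Phi(G)\le\Phi(G')$, so $G'/\Phi(G')$ is a homomorphic image of a nilpotent group, hence nilpotent, and therefore $G'$ itself is nilpotent. Thus $\mathfrak N\mathfrak A$ is a saturated formation (this is of course also recorded in~\cite{dh}). Finally, every supersoluble group has nilpotent commutator subgroup~\cite{dh}, so $\mathfrak U\subseteq\mathfrak N\mathfrak A$. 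Hence Theorem~\ref{theorem:B} applies with $\mathfrak F=\mathfrak N\mathfrak A$ to the mutually permutable product $G=HK$, yielding that both $H$ and $K$ are $ca$-$\mathfrak N\mathfrak A$-groups.

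There is essentially no genuine obstacle here; the statement is a specialization of Theorem~\ref{theorem:B} in the same way that the preceding two corollaries specialize it to $\mathfrak F=\mathfrak U$ and $\mathfrak F=\mathfrak G$. The only point that deserves a line of care is the saturation step, where the Frattini-quotient hypothesis must be transported from $G$ down to $G'$: this is precisely where Gasch\"utz's inclusion $G'\cap\Phi(G)\le\Phi(G')$ is needed, rather than any naive manipulation of quotients.
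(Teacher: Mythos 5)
Your overall route is exactly the intended one: the paper offers no separate argument for this corollary, treating it as the instance $\mathfrak F=\mathfrak N\mathfrak A$ of Theorem~\ref{theorem:B}, so the only real content is verifying that $\mathfrak N\mathfrak A$ is a saturated formation containing $\mathfrak U$. Your identification $\mathfrak N\mathfrak A=\{G : G'\ \text{is nilpotent}\}$, the closure properties, and the inclusion $\mathfrak U\subseteq\mathfrak N\mathfrak A$ are all fine.

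However, the one step you single out as needing care is justified by a false statement. The inclusion $N\cap\Phi(G)\le\Phi(N)$ for $N\trianglelefteq G$ is not a theorem; Gasch\"utz's inclusion goes the other way, namely $\Phi(N)\le\Phi(G)$. A counterexample with $N=G'$ is $G=D_8$: there $G'=\Phi(G)\cong C_2$, so $G'\cap\Phi(G)=G'\ne 1=\Phi(G')$. Hence you cannot obtain $G'/\Phi(G')$ as a homomorphic image of $G'/(G'\cap\Phi(G))$ by the route you describe. The conclusion you want is nevertheless true, and the correct tool is the other theorem of Gasch\"utz: if $N\trianglelefteq G$ and $N\Phi(G)/\Phi(G)$ is nilpotent, then $N$ is nilpotent (equivalently, $F(G/\Phi(G))=F(G)/\Phi(G)$; see~\cite{dh}). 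Applying this with $N=G'$ shows at once that $G/\Phi(G)\in\mathfrak N\mathfrak A$ forces $G'$ to be nilpotent. Alternatively, one may simply quote that $\mathfrak N\mathfrak X$ is a saturated (local) formation for every formation $\mathfrak X$, with local definition $f(p)=\mathfrak N_p\mathfrak X$~\cite{dh}. With that repair your argument is complete and coincides with the paper's implicit proof.
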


It is well known that in general, the product $G = HK$ of two normal 
supersoluble subgroups of a finite group $G$ need not be supersoluble. In 1957 
Baer~\cite{baer} established that such group $G$ will be a supersoluble 
if and only if the derived subgroup $G'$ of $G$ is nilpotent. The next theorem 
is an extension of this result.

\begin{maintheorem}
\label{theorem:C}
Let $\mathfrak F$ be a saturated formation containing the
class $\mathfrak U$ of supersoluble groups. Let the group $G = HK$ be the 
product of the mutually permutable $ca$-$\mathfrak F$-subgroups $H$ and $K$ of
$G$. If the derived subgroup $G'$ of $G$ is quasinilpotent, then $G$ is a
$ca$-$\mathfrak F$-group.
\end{maintheorem}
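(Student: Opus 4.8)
The plan is to argue by induction on $|G|$, to reduce the problem to a single minimal normal subgroup of $G$, and then to verify the three conditions of Theorem~\ref{theorem:A} for $G$; the quasinilpotency of $G'$ is exactly what makes the structural picture of that theorem available here.

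First I would record the consequences of $G'=F^{*}(G')$. A quasinilpotent normal subgroup of $G$ is contained in $F^{*}(G)$, so $G'\le F^{*}(G)=F(G)E(G)$; hence $G/E(G)$ has nilpotent derived subgroup, so is soluble, and since $E(G)$ is perfect and normal this gives $G^{\mathfrak S}=E(G)$, with $E(G)/Z(E(G))$ a direct product of non-abelian simple groups. Moreover $[F(G),E(G)]=1$, so $E(G)$ centralises every abelian chief factor of $G$ lying inside $F(G)$, while the remaining abelian chief factors lie above $F^{*}(G)$ — where $G/F^{*}(G)$ is abelian — hence are of prime order and automatically $\mathfrak F$-central because $\mathfrak U\subseteq\mathfrak F$. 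In particular $G/C_{G}(H/K)$ is soluble for every abelian chief factor $H/K$ of $G$ (and of every quotient of $G$, whose derived subgroup is again quasinilpotent), so for such groups ``$ca$-$\mathfrak F$'' and ``$ca$-$(\mathfrak F\cap\mathfrak S)$'' coincide, and I may work with the soluble saturated formation $\mathfrak F\cap\mathfrak S\supseteq\mathfrak U$, to which Theorem~\ref{theorem:A} applies.

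Since $\mathfrak F_{ca}$ is a formation, and since for every minimal normal subgroup $N$ of $G$ the quotient $G/N=(HN/N)(KN/N)$ is again a mutually permutable product of $ca$-$\mathfrak F$-subgroups with quasinilpotent derived subgroup, induction gives $G/N\in\mathfrak F_{ca}$; using $R_{0}$-closure I may assume $G$ has a unique minimal normal subgroup $N$, and it remains to treat $N$ (every chief factor above $N$ being controlled by $G/N\in\mathfrak F_{ca}$, with $\mathfrak F$-centrality descending from $G/N$ to $G$ as $N$ centralises the chief factors above it). If $N$ is abelian then the layer contains no minimal normal subgroup of $G$, so $E(G)=1$, $G'\le F(G)$ is nilpotent and $G$ is soluble; then $H$ and $K$ are soluble $ca$-$\mathfrak F$-groups, hence equal their own $\mathfrak F$-hypercentres and lie in $\mathfrak F$, and I am reduced to the Baer-type claim that the mutually permutable product $G=HK$ of $\mathfrak F$-subgroups with $G'$ nilpotent satisfies $N\le Z^{\mathfrak F}_{\infty}(G)$. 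For this, $N\le Z(F(G))$, $G/C_{G}(N)$ is abelian and acts faithfully and irreducibly on $N$, hence is cyclic, and the mutual permutability forces either $|N|=p$ — so $N\leftthreetimes G/C_{G}(N)\in\mathfrak U\subseteq\mathfrak F$ — or $N$ to lie inside one factor, say $N\le H$, where $N$ is $\mathfrak F$-central because $H\in\mathfrak F$, the action of the other factor on $N$ being governed by permutability; either way $G^{\mathfrak S}=1=G^{\mathfrak F}$ and Theorem~\ref{theorem:A} yields $G\in\mathfrak F_{ca}$.

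If $N$ is non-abelian then $F(G)=1$, so $F^{*}(G)=E(G)=N=S_{1}\times\cdots\times S_{m}$ with the $S_{i}$ non-abelian simple and permuted transitively by $G$, and the crux is to show $m=1$, i.e. that each $S_{i}$ is normal in $G$; this I expect to be the main obstacle. The strategy is to transfer the ``$ca$'' property from the factors to $G$: a non-abelian minimal normal subgroup of a mutually permutable product lies in one of the factors (a known feature of mutually permutable products, see~\cite{bal}), say $N\le H$, and then $N$ is a normal subgroup of the $ca$-$\mathfrak F$-group $H$ whose $H$-chief factors inside $N$ must all be \emph{simple}, which forces each $S_{i}$ to be $H$-invariant; a complementary use of mutual permutability — each $S_{i}$ is subnormal in $G$ and $K$ permutes with the $S_{i}$ and with their products inside $H$ — then shows each $S_{i}$ is also $K$-invariant, hence $G$-invariant, so $m=1$ by the uniqueness of $N$. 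Once $N$ is simple, combining it with $G/N\in\mathfrak F_{ca}$ gives $G\in\mathfrak F_{ca}$; equivalently, $G^{\mathfrak S}=E(G)$ now satisfies conditions (2) and (3) of Theorem~\ref{theorem:A} (here $Z(E(G))=1$), while condition (1) holds because $G/E(G)$ is a soluble $ca$-$\mathfrak F$-group and therefore lies in $\mathfrak F$. This completes the plan; the delicate point throughout is controlling, via the permutability relations of~\cite{bal}, how the components of $F^{*}(G)$ and the minimal normal subgroups of $G$ distribute between the factors $H$ and $K$.
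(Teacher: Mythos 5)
Your reduction to a unique minimal normal subgroup $N$ and your treatment of the non-abelian case are essentially the paper's: the paper proves exactly your claim that each simple direct factor $S_i$ of $N$ is normal in a $ca$-$\mathfrak F$-factor containing $N$ (its Lemma~\ref{lem:minimal}, via subnormality and simplicity of non-abelian chief factors), and then uses Lemma~\ref{lem:lemma_4.3.3} to get $N_i\lhd K$ or $N\le C_G(K)$, hence $N_i\lhd G$ and $t=1$. But the abelian case contains a genuine error. From ``$N$ abelian'' you conclude that the layer contains no minimal normal subgroup of $G$, hence $E(G)=1$ and $G$ is soluble. This is false: a nontrivial layer is a normal subgroup and therefore contains the unique minimal normal subgroup $N$, which may well be abelian and central in $E(G)$ (think of a quasisimple normal subgroup such as $\mathrm{SL}(2,5)$, whose unique minimal normal subgroup is its centre of order $2$). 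So in the abelian case $G$ need not be soluble, and your subsequent reduction (``$H$ and $K$ are soluble, hence lie in $\mathfrak F$'') collapses. The paper never assumes solubility here; it works with the composition satellite $h$ of $\mathfrak F_{ca}$ and shows directly that $G/C_G(N)\in h(p)=f(p)$.

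The second, and more structural, gap is in the sub-case $N\le H\cap K$ with $N$ non-cyclic, which your dichotomy (``either $|N|=p$ or $N$ lies inside one factor'') simply omits — by Lemma~\ref{lem:lemma_4.3.3}(4) the intersections $H\cap N$ and $K\cap N$ are each $N$ or $1$, and both can equal $N$. There, knowing that $N$ is $\mathfrak F$-central in $H$ and in $K$ only yields $H/C_H(N)\in f(p)$ and $K/C_K(N)\in f(p)$ (via Lemma~\ref{lem:lemma1}); it does not formally give $G/C_G(N)\in f(p)$. This is precisely where the hypothesis that $G'$ is quasinilpotent enters irreplaceably: it forces $G'/C_{G'}(N)$ to be a $p$-group, hence $G/C_G(N)\in\mathfrak N_p\mathfrak A$, and then a dedicated lemma (the paper's Lemma~\ref{lem:abelian}, a Baer-type gluing statement for mutually permutable products inside $\mathfrak N_p\mathfrak A$) delivers $G/C_G(N)\in\mathfrak N_p f(p)=f(p)$. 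You correctly identify that a ``Baer-type claim'' is the crux, but your resolution of it (cyclicity of $G/C_G(N)$, which anyway does not force $|N|=p$, and an unspecified appeal to permutability) does not supply this gluing step. The case $H\cap N=K\cap N=1$, which the paper settles via $(H\cap K)_G=1$, Lemma~\ref{lem:core}, and the normality of $H^{\mathfrak S}$ in $G$, is also left untreated.
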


\setcounter{maincorollary}{0}

\begin{maincorollary}[\cite{bal2}]
Let the group $G = HK$ be the product of the mutually permutable
$c$-supersoluble subgroups $H$ and $K$ of $G$. If the derived subgroup $G'$ of
$G$ is quasinilpotent, then $G$ is $c$-supersoluble.
\end{maincorollary}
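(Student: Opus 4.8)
\emph{Proof proposal.} The Corollary is the special case $\mathfrak F=\mathfrak U$ of Theorem~\ref{theorem:C} (recall that $\mathfrak U$ is a saturated formation containing $\mathfrak U$, and that $\mathfrak U_{ca}=\mathfrak U_c$ is the class of $c$-supersoluble groups), so it is enough to sketch a proof of Theorem~\ref{theorem:C}. The plan is an induction on $|G|$, using that $\mathfrak F_{ca}$ is a composition formation --- hence closed under quotients and under subdirect products, and such that $G\in\mathfrak F_{ca}$ whenever $G/N\in\mathfrak F_{ca}$ for some soluble normal subgroup $N\le\Phi(G)$ --- that the class of quasinilpotent groups is closed under quotients, and that mutual permutability is inherited by quotient groups.

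The first step is the reduction to a minimal counterexample $G$. For any minimal normal subgroup $N$ of $G$, the quotient $G/N=(HN/N)(KN/N)$ is the mutually permutable product of the $ca$-$\mathfrak F$-groups $HN/N\cong H/(H\cap N)$ and $KN/N$, and $(G/N)'=G'N/N$ is an epimorphic image of the quasinilpotent group $G'$, hence quasinilpotent; so $G/N\in\mathfrak F_{ca}$ by induction. Therefore $G^{\mathfrak F_{ca}}\le N$ for every minimal normal subgroup $N$, whence $G$ has a unique minimal normal subgroup $N=G^{\mathfrak F_{ca}}\ne 1$; and since $G/G'$ is abelian, so in $\mathfrak F_{ca}$, we get $N\le G'$. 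As the chief factors of $G$ are $N$ together with those of $G/N\in\mathfrak F_{ca}$, it now suffices (to contradict the choice of $G$) to prove that $N$ is simple when it is non-abelian, and $\mathfrak F$-central when it is abelian.

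Suppose first that $N$ is non-abelian, say $N=S_1\times\cdots\times S_t$ with the $S_i$ isomorphic non-abelian simple groups permuted transitively by $G$. Each $S_i$ is a subnormal quasisimple subgroup of $G'$, that is, a component of $G'$; as $G'$ is quasinilpotent we have $G'=F(G')E(G')$ with each $S_i$ normal in $E(G')$ and $[F(G'),E(G')]=1$, so $G'$ normalizes every $S_i$, and the transitive action of $G$ on $\{S_1,\dots,S_t\}$ factors through the abelian group $G/G'$. At this point I would appeal to the structure theory of mutually permutable products used in the proof of Theorem~\ref{theorem:B} and in~\cite{bal}: both $H$ and $K$ normalize every component $S_i$ as well --- for a component contained in one of the factors this uses that the factor is a $ca$-$\mathfrak F$-group and therefore cannot fuse simple direct summands of $N$, and the mutual permutability of $H$ and $K$ disposes of the remaining components --- so that $G=HK$ normalizes each $S_i$. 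Then $t=1$, $N$ is simple, and we have our contradiction.

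Suppose finally that $N$ is abelian, an elementary abelian $p$-group contained in $G'=F(G')E(G')$. Being abelian and normal in $G'$, $N$ lies in $F(G')$; being a minimal normal subgroup of $G$, $N$ meets the characteristic subgroup $Z(F(G'))$ nontrivially, hence $N\le Z(F(G'))$, so $[N,F(G')]=1$ while $[N,E(G')]\le[F(G'),E(G')]=1$; consequently $G'\le C_G(N)$ and $G/C_G(N)$ is abelian. Since $\mathfrak U\subseteq\mathfrak F$, it remains to show that this abelian group acts on $N$ with all composition factors of order $p$, equivalently that $N\leftthreetimes G/C_G(N)\in\mathfrak U\subseteq\mathfrak F$, contradicting the choice of $G$. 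Here mutual permutability must be used in full: from the fact that $N$ is controlled by $H$ and $K$ (the relevant intersections of $N$ with $H$ and with $K$ being normal in the appropriate subgroups), that $H$ and $K$ are $ca$-$\mathfrak F$-groups and hence act on their parts of $N$ with prime-order chief factors, and that $H$ permutes with every subgroup of $K$ and conversely, one transfers this one-dimensionality to the action of $G=HK$ on the whole of $N$. I expect this abelian case to be the main obstacle: the passage from supersoluble actions of the factors on their parts of $N$ (together with $G/C_G(N)$ abelian) to a supersoluble action of $G$ on $N$ is not formal --- abelian-by-abelian configurations need not be supersoluble --- and it genuinely requires the mutual-permutability lemmas in combination with the $ca$-$\mathfrak F$-hypotheses on $H$ and $K$. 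Once Theorem~\ref{theorem:C} is proved, the Corollary follows immediately by taking $\mathfrak F=\mathfrak U$.
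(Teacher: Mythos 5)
Your reduction is the same as the paper's: the Corollary is obtained by putting $\mathfrak F=\mathfrak U$ in Theorem~\ref{theorem:C} (so that $\mathfrak F_{ca}=\mathfrak U_{\mathrm c}$), and the proof of Theorem~\ref{theorem:C} starts from a minimal counterexample with a unique minimal normal subgroup $N$ such that $G/N\in\mathfrak F_{ca}$. Your non-abelian case is also essentially the paper's argument (Lemma~\ref{lem:minimal} shows that a $ca$-$\mathfrak F$-factor containing $N$ normalizes each $N_i$), although you skip the case distinction of Lemma~\ref{lem:lemma_4.3.3}: when $H\cap N=1$ you need parts (2) and (5) of that lemma (either $N=(H\cap N)(K\cap N)=1$, or $N\le C_G(H)$ because $N$ is non-abelian), not a fusion argument; the quasinilpotency of $G'$ plays no role there in the paper.

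The genuine gap is the abelian case, which you explicitly leave open, and which is where all of the paper's real work sits. Two points. First, your target is wrong for general $\mathfrak F$: you aim to show $N\leftthreetimes G/C_G(N)\in\mathfrak U$, but the correct (and achievable) target is $G/C_G(N)\in f(p)$, where $f$ is the maximal inner local function of $\mathfrak F$; by Lemma~\ref{lem:equal} this is exactly what $\mathfrak F$-centrality of $N$ means, and by Theorem~\ref{theorem:composition} it is exactly what the hypotheses on $H$ and $K$ deliver (via Lemma~\ref{lem:lemma1}, $H/C_H(N)\in\mathfrak N_pf(p)=f(p)$ when $N\le H$). For the Corollary itself ($\mathfrak F=\mathfrak U$, $f(p)=\mathfrak A(p-1)$) your target happens to coincide with the right one, but your sketch still contains no mechanism for reaching it. Second, the step you flag as ``the main obstacle'' is resolved in the paper by a dedicated lemma (Lemma~\ref{lem:abelian}): if $G/C_G(N)$ is a mutually permutable product of two groups in $\mathfrak N_pf(p)$ and lies in $\mathfrak N_p\mathfrak A$ --- the latter is where quasinilpotency of $G'$ enters, since $G'/C_{G'}(N)$ is a $p$-group --- then $G/C_G(N)\in\mathfrak N_pf(p)=f(p)$. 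Moreover the subcase $H\cap N=K\cap N=1$ is not handled by any action-transfer argument at all: there the paper uses $(H\cap K)_G=1$ together with Lemma~\ref{lem:core} and the normality of $H^{\mathfrak S}$ in $G$ to force a contradiction. Without Lemma~\ref{lem:equal}, Lemma~\ref{lem:abelian}, and this case analysis, the abelian case --- and hence the Corollary --- is not proved.
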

 
\begin{maincorollary}[\cite{as-sha}]
Let the group $G = HK$ be the product of the mutually permutable supersoluble
subgroups $H$ and $K$ of $G$. If the derived subgroup $G'$ of $G$ is nilpotent,
then $G$ is supersoluble.
\end{maincorollary}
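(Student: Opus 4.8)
The final Corollary is the case $\mathfrak F=\mathfrak U$ of Theorem~\ref{theorem:C}: a supersoluble group is a $ca$-$\mathfrak U$-group, a nilpotent group is quasinilpotent, and a soluble group---such as $G$, since $G'$ is nilpotent---all of whose chief factors are simple has all chief factors cyclic and so is supersoluble; thus it suffices to prove Theorem~\ref{theorem:C}, which I would do by induction on $|G|$. Let $G$ be a counterexample of least order. Since $\mathfrak F_{ca}$ is a formation, for every minimal normal subgroup $N$ of $G$ the subgroups $HN/N$ and $KN/N$ are $ca$-$\mathfrak F$-groups, their product $G/N$ is again a mutually permutable product, and $(G/N)'=G'N/N$ is a quotient of the quasinilpotent group $G'$, hence quasinilpotent; so $G/N\in\mathfrak F_{ca}$ by minimality. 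If $G$ had two distinct minimal normal subgroups it would embed subdirectly into a product of two $ca$-$\mathfrak F$-groups and hence lie in $\mathfrak F_{ca}$; so $G$ has a unique minimal normal subgroup $N$. Taking a chief series of $G$ through $N$, all factors above $N$ are $\mathfrak F$-central or non-abelian simple because $G/N\in\mathfrak F_{ca}$, so $N$ itself must be the obstruction: $N$ is non-abelian and not simple, or $N$ is abelian and not $\mathfrak F$-central in $G$. Finally, if $N\not\le G'$ then $N\cap G'=1$, so $[G,N]\le G'\cap N=1$ and $N\le Z(G)$ is a central, hence $\mathfrak F$-central (as $\mathfrak U\subseteq\mathfrak F$), chief factor---impossible; thus $N\le G'$.

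Suppose first $N$ is non-abelian, say $N=T_1\times\cdots\times T_t$ with the $T_i$ non-abelian simple and transitively permuted by $G$. Since $N\trianglelefteq G$, both $H$ and $K$ normalise $N$; and by the structure of minimal normal subgroups in mutually permutable products~\cite{bal}, a non-abelian such $N$ is contained in each of the factors, so $N\trianglelefteq H$ and $N\trianglelefteq K$. As $H\in\mathfrak F_{ca}$, the chief factors of $H$ lying inside $N$ are simple; but the $H$-invariant subgroups of $N$ are exactly the sub-products indexed by unions of $H$-orbits on $\{T_1,\dots,T_t\}$, so those chief factors can be simple only if every $H$-orbit is a singleton, i.e. $H$ normalises each $T_i$. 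The same applies to $K$, hence $G=\langle H,K\rangle$ normalises each $T_i$, and minimality of $N$ forces $t=1$: $N$ is simple, contrary to the choice of $N$.

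So $N$ is an elementary abelian $p$-group, and here the quasinilpotency of $G'$ is the key tool. Since $G'$ induces inner automorphisms on each of its chief factors, refining a $G'$-composition series of $N\trianglelefteq G'$ shows that $G'$ centralises every $G'$-chief factor inside $N$, hence stabilises a chain of subgroups of $N$ acting trivially on the factors, so the image of $G'$ in $\mathrm{Aut}(N)$ is a $p$-group. As $G/C_G(N)$ acts faithfully and irreducibly on $N$, a non-trivial normal $p$-subgroup of it would fix a non-zero---hence, by irreducibility, the whole---subspace, which is absurd; so $G'\le C_G(N)$, whence $\overline G:=G/C_G(N)$ is abelian and, being a faithful irreducible $\mathbb F_p$-linear group, is cyclic of order prime to $p$. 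It remains to prove $N\leftthreetimes\overline G\in\mathfrak F$; fixing a local definition $f$ of the saturated formation $\mathfrak F$ with $f(p)\subseteq\mathfrak F$ and $f(p)$ containing all abelian groups of exponent dividing $p-1$ (such an $f$ exists because $\mathfrak U\subseteq\mathfrak F$), this is equivalent to $\overline G\in f(p)$. If $N\cap H=N\cap K=1$, then $|N|=p$ by a theorem on mutually permutable products~\cite{bal}, so $\overline G$ is cyclic of order dividing $p-1$ and lies in $f(p)$. Otherwise $N\le H$, say; then $N\trianglelefteq H$, the chief factors of $H$ inside $N$ are $\mathfrak F$-central in $H$, so $H$ modulo the centraliser of each of them lies in $f(p)$, and since $H/C_H(N)$ has order prime to $p$ it is a subdirect product of those quotients, so $HC_G(N)/C_G(N)\in f(p)$ by the $R_0$-closure of $f(p)$; as $\overline G=(HC_G(N)/C_G(N))(KC_G(N)/C_G(N))$ and the analogous fact holds for $K$---immediate when $N\le K$, and otherwise extracted from the action of $K$ on $N$, using that $K$ permutes with every subgroup of $N$---it follows, because a cyclic group which is a product of two $f(p)$-subgroups lies in $f(p)$ (argue on its Sylow subgroups), that $\overline G\in f(p)$, contradicting the choice of $N$. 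Hence no counterexample exists. The only step that is not routine once this framework is set up is the last one---transferring $\mathfrak F$-centrality from the factors $H$ and $K$ to $G$ in the abelian case---and it is precisely there that the fine structure theory of mutually permutable products and the hypothesis $\mathfrak U\subseteq\mathfrak F$ are both indispensable.
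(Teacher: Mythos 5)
Your reduction of the corollary to Theorem~\ref{theorem:C} is exactly what the paper intends: the corollary is stated with no separate proof, as the instance $\mathfrak F=\mathfrak U$, and your chain (supersoluble $\Rightarrow$ $ca$-$\mathfrak U$, nilpotent $\Rightarrow$ quasinilpotent, and $G$ soluble $c$-supersoluble $\Rightarrow$ supersoluble since $G'$ nilpotent forces $G$ soluble) is correct. Your proof of Theorem~\ref{theorem:C} itself also follows the paper's overall plan (minimal counterexample, unique minimal normal subgroup $N$, split on $N$ abelian or not, and in the abelian case push $\mathfrak F$-centrality through the local definition $f(p)$), with some pleasant local variations: you observe directly that $G/C_G(N)$ is cyclic of $p'$-order, whereas the paper keeps $G/C_G(N)\in\mathfrak N_p\mathfrak A$ and invokes Lemma~\ref{lem:abelian}; and in the case $H\cap N=K\cap N=1$ you quote $|N|=p$ where the paper instead uses normality of $H^{\mathfrak S}$ in $G$ together with Lemma~\ref{lem:core}.

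There is, however, one genuinely false step in your non-abelian case: you assert that a non-abelian minimal normal subgroup of a mutually permutable product is contained in \emph{both} factors. Lemma~\ref{lem:lemma_4.3.3}(2) only gives $N=(N\cap H)(N\cap K)$ with each intersection equal to $N$ or $1$, so $N$ lies in at least one factor, and the configuration $N\le H$, $N\cap K=1$ is a real case (the paper treats it as its case 3, using Lemma~\ref{lem:lemma_4.3.3}(5) to get $N\le C_G(K)$ and then Lemma~\ref{lem:minimal} applied to $H$ alone to force $t=1$). Your orbit argument only covers the two-sided case, so as a proof of Theorem~\ref{theorem:C} it is incomplete there; the same one-sidedness is hand-waved in your abelian case when you say the membership of $KC_G(N)/C_G(N)$ in $f(p)$ is ``extracted from the action of $K$ on $N$'' --- the actual mechanism is again Lemma~\ref{lem:lemma_4.3.3}(5), which makes that image trivial when $N$ is non-cyclic, with the cyclic case handled by $G/C_G(N)\in\mathfrak A(p-1)\subseteq f(p)$. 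None of this damages the corollary under review, since $G'$ nilpotent makes $G$ soluble and the non-abelian case is vacuous, but it does leave a gap in the general statement you chose to prove.
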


\begin{maincorollary}
Let the group $G = HK$ be the product of the mutually permutable
$ca$-$\mathfrak N \mathfrak A$-subgroups $H$ and $K$ of $G$. If the derived
subgroup $G'$ of $G$ is quasinilpotent, then $G$ is
$ca$-$\mathfrak N \mathfrak A$-group.
\end{maincorollary}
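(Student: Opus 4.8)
The plan is to obtain this corollary as the special case $\mathfrak F = \mathfrak N\mathfrak A$ of Theorem~\ref{theorem:C}. To invoke that theorem it suffices to check that $\mathfrak N\mathfrak A$ satisfies its standing hypotheses, namely that $\mathfrak N\mathfrak A$ is a saturated formation and that $\mathfrak U \subseteq \mathfrak N\mathfrak A$. Granting this, the mutual permutability of the $ca$-$\mathfrak N\mathfrak A$-subgroups $H$ and $K$ of $G = HK$ together with the quasinilpotency of $G'$ gives, by Theorem~\ref{theorem:C}, that $G$ is a $ca$-$\mathfrak N\mathfrak A$-group, which is exactly the assertion.

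First I would record the description $\mathfrak N\mathfrak A = \{\, G : G' \text{ is nilpotent} \,\} = \{\, G : G/F(G) \in \mathfrak A \,\}$, where $F(G)$ is the Fitting subgroup; the two descriptions agree because a nilpotent normal subgroup is contained in $F(G)$ and, conversely, subgroups of the nilpotent group $F(G)$ are nilpotent. Closure of $\mathfrak N\mathfrak A$ under homomorphic images and subdirect products is classical, since $\mathfrak N\mathfrak A$ is the product of the formations $\mathfrak N$ and $\mathfrak A$; so $\mathfrak N\mathfrak A$ is a formation. For saturation I would use the classical identity $F(G/\Phi(G)) = F(G)/\Phi(G)$ together with $\Phi(G) \le F(G)$: if $G/\Phi(G) \in \mathfrak N\mathfrak A$, then $(G/\Phi(G))' = G'\Phi(G)/\Phi(G) \le F(G)/\Phi(G)$, whence $G' \le G'\Phi(G) \le F(G)$, i.e. $G \in \mathfrak N\mathfrak A$; thus $\mathfrak N\mathfrak A$ is saturated. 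Finally, $\mathfrak U \subseteq \mathfrak N\mathfrak A$ is the well-known fact that the derived subgroup of a supersoluble group is nilpotent.

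I expect the only genuinely substantive point to be the verification that $\mathfrak N\mathfrak A$ is saturated, which is what separates it from the merely composition (non-saturated) formation $\mathfrak U_c$ mentioned in the introduction; the Fitting-subgroup identity $F(G/\Phi(G)) = F(G)/\Phi(G)$ does all the work there. Everything else is bookkeeping: with $\mathfrak F = \mathfrak N\mathfrak A$ now known to be a saturated formation containing $\mathfrak U$, Theorem~\ref{theorem:C} applies verbatim and delivers the conclusion, which — using the description of $(\mathfrak N\mathfrak A)_{ca}$ from the introduction — may also be stated as: every non-abelian chief factor of $G$ is simple and $\mathrm{Aut}_G(H/K)$ is abelian for every abelian chief factor $H/K$ of $G$.
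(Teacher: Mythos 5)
Your proposal is correct and matches the paper's (implicit) argument: the corollary is stated as an immediate specialization of Theorem~\ref{theorem:C} to $\mathfrak F = \mathfrak N\mathfrak A$, and your verification that $\mathfrak N\mathfrak A$ is a saturated formation containing $\mathfrak U$ (via the Gasch\"utz identity $F(G/\Phi(G)) = F(G)/\Phi(G)$ and the nilpotency of the derived subgroup of a supersoluble group) is exactly the bookkeeping the paper leaves to the reader.
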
 

The following corollary extends~\cite{mysl} the properties of normal products of
$ca$-$\mathfrak F$-groups.

\begin{maincorollary}
Let $\mathfrak F$ be a saturated formation containing the class $\mathfrak U$ of
supersoluble groups. If $G = HK$ is the product of normal
$ca$-$\mathfrak F$-subgroups $H$ and $K$ of $G$ and the derived subgroup $G'$ of
$G$ is quasinilpotent, then $G$ is a $ca$-$\mathfrak F$-group.
\end{maincorollary}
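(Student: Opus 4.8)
The plan is to obtain this statement immediately from Theorem~\ref{theorem:C}; the only point that genuinely needs to be checked is that a product of two normal subgroups is always a mutually permutable product in Carocca's sense.

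First I would record the elementary fact that if $N \trianglelefteq G$ and $L \leq G$, then $NL = LN \leq G$, so a normal subgroup of $G$ permutes with every subgroup of $G$. Taking $N = H$: every subgroup of $K$ lies in $G$, hence $H$ permutes with every subgroup of $K$; symmetrically, $K$ permutes with every subgroup of $H$. Therefore $G = HK$ is the mutually permutable product of the subgroups $H$ and $K$.

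Now $H$ and $K$ are $ca$-$\mathfrak F$-subgroups of $G$, the formation $\mathfrak F$ is saturated with $\mathfrak U \subseteq \mathfrak F$, and $G'$ is quasinilpotent by hypothesis; thus all the assumptions of Theorem~\ref{theorem:C} hold, and it yields at once that $G$ is a $ca$-$\mathfrak F$-group. There is no real obstacle in this argument — the whole weight is carried by Theorem~\ref{theorem:C}, and the verification that normality implies mutual permutability is routine. The reason for isolating the corollary is that it sharpens the description of normal products of $ca$-$\mathfrak F$-subgroups from~\cite{mysl}, showing that quasinilpotency of $G'$ alone already suffices, in the spirit of Baer's criterion~\cite{baer} for products of normal supersoluble subgroups.
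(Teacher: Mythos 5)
Your proposal is correct and matches the paper's intended derivation: the corollary is stated as an immediate consequence of Theorem~\ref{theorem:C}, and the only thing to check is exactly what you verify, namely that normal subgroups permute with all subgroups of $G$, so $G=HK$ is a mutually permutable product and Theorem~\ref{theorem:C} applies directly.
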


\section{Preliminaries}

Standard notations, notions and results are used in the paper 
(see~\cite{dh,shem2}). Recall significant notions and notations for this paper.
$\mathbb{P}$ is the set of all prime numbers; 
$1$ is an identity group;
$H \leftthreetimes K$ is a semidirect product of groups $H$ and $K$; 
$\mathfrak{G}$ is the class of all groups;
$\mathfrak{S}$ is the class of all soluble groups; 
$\mathfrak{U}$ is the class of all supersoluble groups; 
$\mathfrak{U}_\mathrm c$ is the class of all $c$-supersoluble groups; 
$\mathfrak{N}$ is the class of nilpotent groups; 
$\mathfrak{N}_p$ is the class of all $p$-groups; 
$\mathfrak J$ is the class of all simple groups; 
$\mathfrak{A}(p-1)$ is the class of all abelian groups of exponent dividing $p-1$.

A {\it formation} is a homomorph $\mathfrak F$ of groups such that each group
$G$ has the smallest normal subgroup (called $\mathfrak F$-residual and denoted by
$G^\mathfrak F$) with quotient in $\mathfrak F$. A formation $\mathfrak F$ 
is said to be {\it saturated} if it contains each group $G$ with
$G / \Phi(G) \in \mathfrak F$. A formation $\mathfrak F$ is said to be
(normally) hereditary if it contains all (normal) subgroups of every group in 
$\mathfrak F$.

Let $\mathfrak F$ be a non-empty formation. $G_{\mathfrak F}$ denotes
$\mathfrak F$-radical of group $G$, i.e., the largest normal  
$\mathfrak F$-subgroup of $G$

A function $f: \mathbb{P} \rightarrow \{\textrm{formations of groups}\}$ is called a
{\it local formation function}. The symbol $LF(f)$ denotes the class of all groups such
that either $G = 1$ or $G \neq 1$ and $G/C_G(H/K) \in f(p)$ for every chief
factor $H/K$ of $G$ and every $p \in \pi (H/K)$. The class $LF(f)$ is a 
non-empty formation.

 For a formation $\mathfrak F$,
if there exists a formation function $f$ such that $\mathfrak F = LF(f)$ then
$\mathfrak F$ is called a {\it local formation}.  It is known that a formation 
$\mathfrak F$ is local if and only if it is saturated~\cite[IV, Theorem 4.6]{dh}.

A formation $\mathfrak F$ is said to be solubly saturated, composition, or Baer-local
formation if it contains each group $G$ with $G / \Phi(N) \in \mathfrak F$ for
 some soluble normal subgroup $N$ of $G$.
For every function $f$ of the form
$f : \mathfrak J \rightarrow \{\textrm{formations of groups}\}$ we put,
$CLF(f) = \{G \; \textrm{is a group} \; | \; G / C_G(H/K) \in f(A)$ for
every $A \in \mathcal K_{H/K})\}$. It is well known that a {\it composition
formation} (or a Baer-local formation if we use the terminology in~\cite{dh})
$\mathfrak F$ is exactly a class $\mathfrak F=CLF(f)$ for some function $f$ of
above-mention form. In this case, the function $f$ is said to be a
{\it composition satellite}~\cite{shem00} of the formation $\mathfrak F$.

A local function $f$ is called {\it an inner local function} if
$f(p) \subseteq LF(f)$ for every prime~$p$. Function $f$ is called {\it a 
maximal inner local function} of formation $\mathfrak F$ if $f$ is a maximal
element of set of all inner local functions of formation $\mathfrak F$.
Similarly, we can introduce the notion of the inner composition satellite and
maximal inner composition satellite.

Every local (composition) formation has the unique maximal inner local function
(composition satellite)~\cite[ch.~1]{shem2}.

We will use the following results.

\begin{lemma} [\cite{mysl}]
\label{lem:formation}
Let $\mathfrak F$ be a class of groups. Then the class $\mathfrak F_{ca}$ is a 
non-empty formation.
\end{lemma}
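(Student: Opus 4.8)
The plan is to verify directly the three defining features of a non-empty formation: that $\mathfrak F_{ca}\neq\varnothing$, that $\mathfrak F_{ca}$ is closed under epimorphic images, and that $\mathfrak F_{ca}$ is closed under subdirect products. By the standard criterion (see~\cite{dh} or~\cite{shem2}) these facts together give that $\mathfrak F_{ca}$ is a formation, with $G^{\mathfrak F_{ca}}=\bigcap\{N\trianglelefteq G: G/N\in\mathfrak F_{ca}\}$. The first point is trivial: the identity group has no chief factors, hence lies in $\mathfrak F_{ca}$ no matter what $\mathfrak F$ is.

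For closure under epimorphic images, take $G\in\mathfrak F_{ca}$, $N\trianglelefteq G$, and put $\bar G=G/N$. Each chief factor of $\bar G$ has the form $\bar H/\bar K$ with $N\le K\le H\le G$ and $H/K$ a chief factor of $G$, and one has the natural isomorphisms $\bar H/\bar K\cong H/K$ and $\bar G/C_{\bar G}(\bar H/\bar K)\cong G/C_G(H/K)$, compatible with the conjugation actions. Hence $\bar H/\bar K$ is simple when it is non-abelian, and $\bar H/\bar K\leftthreetimes \bar G/C_{\bar G}(\bar H/\bar K)\cong H/K\leftthreetimes G/C_G(H/K)\in\mathfrak F$ when it is abelian. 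So $\bar G\in\mathfrak F_{ca}$.

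For closure under subdirect products it is enough, reducing to the case of two factors, to show: if $N_1,N_2\trianglelefteq G$ with $N_1\cap N_2=1$ and $G/N_1,G/N_2\in\mathfrak F_{ca}$, then $G\in\mathfrak F_{ca}$. The crux is the remark that, since $N_1\cap N_2=1$, every chief factor $H/K$ of $G$ is isomorphic, as a $G$-operator group, to a chief factor of $G/N_1$ or of $G/N_2$, with the same centraliser in $G$. Indeed, as $H\cap N_1$ is normal in $G$ and $H/K$ is chief, either $H\cap N_1\le K$, in which case $xK\mapsto xKN_1$ is a $G$-isomorphism of $H/K$ onto the chief factor $HN_1/KN_1$ of $G/N_1$; or $H=K(H\cap N_1)$, in which case $H/K\cong(H\cap N_1)/(K\cap N_1)$ as $G$-operator groups, and since $H\cap N_1\le N_1$ meets $N_2$ trivially, reduction modulo $N_2$ identifies the latter with a chief factor of $G/N_2$. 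Either way the centraliser of the chief factor is unchanged, so $G/C_G(H/K)$ is isomorphic to the corresponding quotient in $G/N_i$, compatibly with the action on $H/K$. Hence $H/K$ is simple when non-abelian and $\mathfrak F$-central in $G$ when abelian, because the matching chief factor of $G/N_i$ has these properties. Thus $G\in\mathfrak F_{ca}$, and the lemma follows.

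The only delicate step is this last one: one must check that the $G$-isomorphisms constructed there transport not merely the chief factor $H/K$ but the whole semidirect product $H/K\leftthreetimes G/C_G(H/K)$, so that the clause ``$\mathfrak F$-central'' — which a priori depends on $H/K$ as a module and not just on the abstract group $G/C_G(H/K)$ — is genuinely inherited from the relevant quotient. Everything else is the routine behaviour of chief factors and centralisers under homomorphisms.
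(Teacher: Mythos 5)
Your argument is correct and complete: non-emptiness via the trivial group, closure under quotients via the correspondence of chief factors and their centralisers, and $R_0$-closure by matching each chief factor of $G$ with a $G$-isomorphic chief factor of $G/N_1$ or $G/N_2$ when $N_1\cap N_2=1$; you also rightly flag the one delicate point, namely that a $G$-isomorphism of chief factors preserves the centraliser and hence the semidirect product $H/K\leftthreetimes G/C_G(H/K)$, so $\mathfrak F$-centrality is genuinely transported. The paper states this lemma only with a citation to \cite{mysl} and reproduces no proof, so there is nothing to compare against directly; your proof is the standard one (an equivalent packaging runs a chief series of $G$ through $N_1$, uses $N_1\cong N_1N_2/N_2$ to treat the factors below $N_1$, and invokes the Jordan--H\"older theorem for operator groups, exactly as the paper does in the converse direction of Theorem~\ref{theorem:A}).
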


\begin{theorem} [\cite{mysl}]
\label{theorem:composition}
Let $\mathfrak F$ be a saturated formation and $f$ is its maximal inner local
function. Then the formation $\mathfrak F_{ca}$ is a composition formation and
has a maximal inner composition satellite $h$ such that
$h(N) = \mathfrak F_{ca}$, if $N$ is a non-abelian group and $h(N) = f(p)$, if 
$N$ is a simple $p$-group, where $p$ is a prime.
\end{theorem}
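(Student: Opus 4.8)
The plan is to prove the theorem by establishing the identity $\mathfrak F_{ca} = CLF(h)$ directly --- which already shows that $\mathfrak F_{ca}$ is a composition formation, being of the form $CLF(\cdot)$ --- and then by checking that the function $h$ in the statement is an inner composition satellite dominating every inner composition satellite of $\mathfrak F_{ca}$. Writing $\mathfrak F = LF(f)$, I shall use throughout the standard correspondence attached to a saturated formation and its maximal inner local function: an abelian $p$-chief factor $H/K$ of a group $G$ is $\mathfrak F$-central if and only if $G/C_G(H/K) \in f(p)$; a group lies in $\mathfrak F$ if and only if all of its chief factors are $\mathfrak F$-central; and $f(p) \subseteq \mathfrak F$ for every prime $p$.

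The conceptual heart is a short remark about non-abelian chief factors, which I would record first. If $H/K$ is a non-abelian chief factor of $G$, then $Z(H/K) = 1$ forces $C_G(H/K) \cap H = K$, so $HC_G(H/K)/C_G(H/K) \cong H/K$ is a normal subgroup of $\bar G := G/C_G(H/K)$; and since $H/K$ is a chief factor of $G$ it is in fact a minimal normal subgroup of $\bar G$. Hence $\bar G$ has a chief factor isomorphic to $H/K$; in particular, if $H/K$ is not simple then $\bar G \notin \mathfrak F_{ca}$. This is precisely what makes the requirement ``$G/C_G(H/K) \in \mathfrak F_{ca}$'' detect simplicity of non-abelian chief factors.

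Granting this, the inclusion $\mathfrak F_{ca} \subseteq CLF(h)$ is straightforward. Let $G \in \mathfrak F_{ca}$ and let $H/K$ be a chief factor with composition factor $A$. If $A$ is non-abelian, then $H/K \cong A$ is simple and $G/C_G(H/K) \in \mathfrak F_{ca} = h(A)$ by quotient-closure of the formation $\mathfrak F_{ca}$. If $A$ is a simple $p$-group, then $H/K$ is $\mathfrak F$-central, so $X := (H/K) \leftthreetimes G/C_G(H/K) \in \mathfrak F$; since $H/K$ is a $p$-chief factor of $X$ with $C_X(H/K) = H/K$, we get $G/C_G(H/K) \cong X/C_X(H/K) \in f(p) = h(A)$. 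Hence $G \in CLF(h)$. Conversely, let $G \in CLF(h)$. A non-abelian chief factor $H/K$ satisfies $G/C_G(H/K) \in \mathfrak F_{ca}$, so it is simple by the remark. An abelian $p$-chief factor $H/K$ satisfies $G/C_G(H/K) \in f(p) \subseteq \mathfrak F$; forming $X := (H/K) \leftthreetimes G/C_G(H/K)$, the chief factor $H/K$ is $f$-central in $X$ because its centraliser quotient is $G/C_G(H/K) \in f(p)$, and every chief factor of $X$ lying above $H/K$ is $f$-central too, being a chief factor of $X/(H/K) \cong G/C_G(H/K) \in \mathfrak F$ which $H/K$ centralises; hence $X \in \mathfrak F$, that is, $H/K$ is $\mathfrak F$-central. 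Thus $G \in \mathfrak F_{ca}$, and $\mathfrak F_{ca} = CLF(h)$.

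It then remains to recognise $h$ as the maximal inner composition satellite of $\mathfrak F_{ca}$. Innerness at a non-abelian simple group $A$ is immediate, since $h(A) = \mathfrak F_{ca} = CLF(h)$; at a simple $p$-group it reduces to $f(p) \subseteq \mathfrak F_{ca}$, which I would verify by noting that every group in $f(p)$ lies in $\mathfrak F$ --- so its chief factors are $\mathfrak F$-central --- and, by the structure of the maximal inner local function of a saturated formation, has all its non-abelian chief factors simple. For maximality, given any inner composition satellite $h'$ of $\mathfrak F_{ca}$, at a non-abelian simple group $A$ one has $h'(A) \subseteq CLF(h') = \mathfrak F_{ca} = h(A)$ at once, while at a simple $p$-group $A$ one shows $h'(A) \subseteq f(p)$ by realising each $Y \in h'(A) \subseteq \mathfrak F_{ca}$ as $Y \cong G/C_G(H/K)$ for a suitable abelian $p$-chief factor $H/K$ of a group $G \in \mathfrak F_{ca}$, which forces $H/K$ to be $\mathfrak F$-central and hence $Y \in f(p)$ --- equivalently, by invoking the maximality of $f$ among the inner local functions of $\mathfrak F$. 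I expect this last part to be the main obstacle: transporting ``inner'' and ``maximal'' from the prime-indexed function $f$ of $\mathfrak F$ to the simple-group-indexed satellite $h$ of $\mathfrak F_{ca}$ --- notably the inclusion $f(p) \subseteq \mathfrak F_{ca}$ and the realisation of an arbitrary member of $h'(A)$ as a chief-factor centraliser quotient inside an $\mathfrak F_{ca}$-group --- whereas the non-abelian side is carried entirely by the short remark of the second paragraph.
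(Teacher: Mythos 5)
This theorem is quoted by the paper from the reference \cite{mysl} and no proof of it appears in the present text, so there is nothing in-paper to compare against; I can only assess your argument on its own terms. The central portion --- the identity $\mathfrak F_{ca}=CLF(h)$ via the observation that a non-abelian chief factor $H/K$ reappears as a minimal normal subgroup of $G/C_G(H/K)$, together with the standard translation between $\mathfrak F$-centrality of an abelian $p$-chief factor and membership of $G/C_G(H/K)$ in $f(p)$ --- is correct and is surely the core of any proof of this statement.

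The genuine gap is exactly where you predicted it: the innerness of $h$ at a simple $p$-group, i.e.\ the inclusion $f(p)\subseteq\mathfrak F_{ca}$. Your justification --- that every group in $f(p)$ has all its non-abelian chief factors simple ``by the structure of the maximal inner local function of a saturated formation'' --- is not a property that maximal inner local functions possess. Take $\mathfrak F=\mathfrak G$: this is a saturated formation, its maximal inner local function is $f(p)=\mathfrak G$ for every $p$, and $\mathfrak G\not\subseteq\mathfrak G_{ca}$ (a group in which a wreathing element fuses two copies of a non-abelian simple group into a single non-simple chief factor lies in $f(p)$ but not in $\mathfrak G_{ca}$). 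So this step cannot be repaired by a general structural remark; it goes through only under an additional hypothesis such as solubility of $\mathfrak F$ (then $f(p)\subseteq\mathfrak F\subseteq\mathfrak S$ and the non-abelian condition is vacuous, while the abelian chief factors of a group in $\mathfrak F$ are automatically $\mathfrak F$-central), and one would have to consult \cite{mysl} for the precise hypotheses under which the satellite is actually inner. A second, lesser weakness is the maximality argument at $Z_p$: not every $Y\in h'(Z_p)$ can be realised as $G/C_G(H/K)$ for an abelian $p$-chief factor, since that requires a faithful irreducible $\mathbb F_pY$-module, which need not exist (e.g.\ $Y$ a $p$-group). The standard repair is to act on a faithful, possibly reducible, $\mathbb F_pY$-module $V$, deduce $Y/O\in f(p)$ for the $p$-subgroup $O$ stabilising a chief series of $V$, and then invoke the fullness $f(p)=\mathfrak N_pf(p)$ of the maximal inner local function; your sketch omits both the module construction and the appeal to fullness, and the alternative phrase ``by invoking the maximality of $f$'' does not by itself supply the argument.
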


\begin{lemma}[\cite{vas-vas}]
\label{lem:lemma1}
Let $\mathfrak F$ be a formation and $N$ be a minimal normal subgroup of $G$
such as $|N| = p^{a}$ for some prime $p$. If $N$ contains in the subgroup $H$ of
$G$ and $H / C_{H}(U/V) \in \mathfrak F$ for every $H$-chief factor $U/V$ of
$N$, then $H / C_{H}(N) \in\mathfrak {N}_{p}\mathfrak F$.
\end{lemma}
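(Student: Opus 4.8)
The plan is to split the action of $H$ on $N$ into the part acting on the individual $H$-chief factors of $N$ and the part stabilising the whole chief series of $N$, controlling the former by means of the $\mathfrak F$-residual and the latter by the classical theory of stability groups of a series.

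Since $N$ is a minimal normal subgroup of $G$ with $|N|=p^{a}$, it is an elementary abelian $p$-group; as $N\le H$ and $N\trianglelefteq G$, we have $N\trianglelefteq H$, so we may fix an $H$-chief series $1=N_0<N_1<\dots<N_k=N$ of $N$. First I would put $C_i=C_H(N_i/N_{i-1})$ for $1\le i\le k$ and $C=\bigcap_{i=1}^{k}C_i$. Each $N_i$ is normal in $H$, hence so is each $C_i$ and so is $C$, and clearly $C_H(N)\le C\trianglelefteq H$. By hypothesis $H/C_i\in\mathfrak F$ for every $i$, so $H^{\mathfrak F}\le C_i$ for every $i$; therefore $H^{\mathfrak F}\le C$, and hence $H/C\in\mathfrak F$.

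Next I would analyse $C/C_H(N)$. By the choice of $C$ we have $[N_i,C]\le N_{i-1}$ for all $i$, so $C/C_H(N)$ is isomorphic to a group of automorphisms of $N$ that stabilises the flag $1=N_0<\dots<N_k=N$. By the classical result on the stabiliser of a series (see, e.g., \cite{dh}) such a group is nilpotent, and since $N$ is an elementary abelian $p$-group it embeds, after refining the flag to a complete one and choosing an adapted basis, into the upper unitriangular matrix group over the field of $p$ elements; hence $C/C_H(N)$ is a $p$-group, i.e.\ $C/C_H(N)\in\mathfrak N_p$. Finally, $C/C_H(N)$ is a normal subgroup of $H/C_H(N)$ with $(H/C_H(N))/(C/C_H(N))\cong H/C\in\mathfrak F$, and since $\mathfrak N_p$ is subgroup-closed this gives $(H/C_H(N))^{\mathfrak F}\le C/C_H(N)\in\mathfrak N_p$, that is, $H/C_H(N)\in\mathfrak N_p\mathfrak F$, as required.

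The only step that is not purely formal is the assertion that the series-stabiliser $C/C_H(N)$ is a genuine $p$-group, not merely nilpotent; this is precisely where the hypothesis that $|N|$ is a prime power — equivalently, that $N$ is elementary abelian — is used. Everything else appeals only to the defining property of a formation (the existence and inheritance of the $\mathfrak F$-residual) and to the evident normality of the centralisers $C_i$ in $H$, so I do not anticipate any further obstacle.
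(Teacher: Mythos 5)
Your argument is correct and complete: the decomposition of $H/C_H(N)$ into the series-stabiliser $C/C_H(N)$ (a $p$-group, since $N$ is elementary abelian) and the quotient $H/C\in\mathfrak F$ obtained from $H^{\mathfrak F}\le\bigcap_i C_i$ is exactly the standard proof of this fact. The paper itself gives no proof, citing the lemma from Vasil'ev--Vasil'eva, so there is nothing to compare against; your write-up supplies the expected argument.
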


\begin{lemma} [\cite{bal}]
\label{lem:factor}
Assume that the subgroups $A$ and $B$ of the group $G$ are mutually
permutable and that $N$ is a normal subgroup of $G$. Then the subgroups $AN / N$
and $BN / N$ are mutually permutable in $G/N$. 
\end{lemma}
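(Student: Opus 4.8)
The plan is to exploit the symmetry of the statement: both the hypothesis (``$A$ and $B$ are mutually permutable'') and the conclusion are unchanged under swapping $A$ and $B$, so it will be enough to prove that $AN/N$ permutes with every subgroup of $BN/N$; the claim that $BN/N$ permutes with every subgroup of $AN/N$ then follows by interchanging the roles of $A$ and $B$. Since $N \trianglelefteq G$, the sets $AN$ and $BN$ are subgroups of $G$ containing $N$, so $AN/N$ and $BN/N$ are genuine subgroups of $G/N$, and by the correspondence theorem every subgroup of $BN/N$ has the form $W/N$ with $N \le W \le BN$.

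The first real step is to recognize such a $W$ as coming from a subgroup of $B$: because $N \le W \le BN$, Dedekind's modular law gives $W = W \cap BN = (W \cap B)N$, so with $U := W \cap B \le B$ we have $W/N = UN/N$. Now mutual permutability of $A$ and $B$ applies to the subgroup $U$ of $B$, yielding that $AU = UA$ is a subgroup of $G$; and since $N$ is normal in $G$, $AUN = (AU)N$ is a subgroup of $G$ as well. Pushing this to the quotient and using $N \trianglelefteq G$ to absorb the spare copies of $N$, one computes
\[
(AN/N)(W/N) = ANUN/N = AUN/N = UAN/N = UNAN/N = (W/N)(AN/N),
\]
where the central equality uses $AU = UA$. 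Hence $AN/N$ and $W/N$ commute as subgroups, with product the subgroup $AUN/N$; that is, $AN/N$ permutes with $W/N$.

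Combining this with the symmetric argument gives that $AN/N$ and $BN/N$ are mutually permutable in $G/N$. The only point that needs any care — and the ``obstacle'', such as it is — is the identity $W = (W \cap B)N$ for subgroups $W$ lying between $N$ and $BN$: this is precisely the modular-law step that converts a subgroup of the quotient $BN/N$ into (the image of) a subgroup of $B$, which is what makes the hypothesis on $A$ and $B$ usable after passing to $G/N$. The remaining manipulations are routine once one keeps track of the normality of $N$, which guarantees that the various products with $N$ remain subgroups.
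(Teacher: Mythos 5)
Your proof is correct. The paper itself gives no proof of this lemma --- it is quoted from the monograph of Ballester-Bolinches, Esteban-Romero and Asaad \cite{bal} --- but your argument is the standard one: the key point, as you correctly identify, is the Dedekind modular law identity $W=(W\cap B)N$ for $N\le W\le BN$, which lets you realize every subgroup of $BN/N$ as $UN/N$ for some $U\le B$ and then apply the mutual permutability of $A$ and $B$ directly; the absorption of the extra copies of $N$ using its normality is routine, and the symmetry reduction is legitimate. No gaps.
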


\begin{lemma}[\cite{bal}]
\label{lem:lemma_4.3.3}
Let the group $G = AB$ be the mutually permutable product of the subgroups $A$
and $B$. Then: 

1. If $N$ is a maximal normal subgroup of $G$, then 
$\{AN, BN, (A \cap B)N\} \subseteq \{N, G\}$. 

2. If $N$ is a non-abelian minimal normal subgroup of $G$, then
$\{A \cap N, B \cap N\} \subseteq \{N, 1\}$ and $N = (N \cap A)(N \cap B)$ (that
is, $N$ is prefactorised with respect to $G = AB$). 

3. If $N$ is a minimal normal subgroup of $G$, then $N \leq A \cap B$ or
$[N, A \cap B] = 1$.

4. If $N$ is a minimal normal subgroup of $G$, then
$\{A \cap N, B \cap N \} \subseteq \{N, 1 \}$.

5. If $N$ is a minimal normal subgroup of $G$ contained in $A$ and
$B \cap N = 1$, then $N \leq C_G(A)$ or $N \leq C_G(B)$. If furthermore $N$ is
not cyclic, then $N \leq C_G(B)$. 
\end{lemma}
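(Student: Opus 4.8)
The plan is to derive all five statements from the basic apparatus of mutually permutable products: the quotient--reduction Lemma~\ref{lem:factor}; Dedekind's modular law, used repeatedly to compute intersections of products with the normal subgroup $N$; Ore's theorem that permutable subgroups are subnormal; and the structural dichotomy for minimal normal subgroups of a mutually permutable product $G=AB$ --- namely, that for every minimal normal subgroup $N$ of $G$ one has $\{A\cap N,\,B\cap N\}\subseteq\{1,N\}$, and if $N$ is nonabelian then $N\le A$ or $N\le B$. I would establish this dichotomy first, by induction on $|G|$: one passes to $G/L$ for a suitably chosen minimal normal subgroup $L$ via Lemma~\ref{lem:factor}, and in the nonabelian case one analyses the simple components of $N$, which $G$ permutes transitively, against the fact that $A$ permutes with every subgroup of $B$. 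I expect this to be the main obstacle, as it is the only point where one must genuinely use the architecture of mutually permutable products rather than formal modular-law manipulation.

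Granting the dichotomy, parts 4 and 2 are immediate: $\{A\cap N,B\cap N\}\subseteq\{1,N\}$ is precisely part 4 together with the first clause of part 2, and for nonabelian $N$ the containment $N\le A$ (or $N\le B$) gives $N=(N\cap A)(N\cap B)$ at once. For part 1, pass to the simple group $\overline G=G/N$ (simple since $N$ is maximal normal), which by Lemma~\ref{lem:factor} is again a mutually permutable product $\overline A\,\overline B$ with $\overline A=AN/N$, $\overline B=BN/N$. Applying the dichotomy in $\overline G$, whose only minimal normal subgroup is $\overline G$ itself, gives $\overline A=\overline G\cap\overline A\in\{1,\overline G\}$, and likewise $\overline B\in\{1,\overline G\}$; hence $AN,BN\in\{N,G\}$. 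The assertion $(A\cap B)N\in\{N,G\}$ then follows by a similar but more delicate argument, which in addition exploits the permutability properties inherited by $A\cap B$ (which lies in both $A$ and $B$) and, via the modular law, the product $(A\cap N)(B\cap N)$.

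For part 5, assume $N\le A$ and $B\cap N=1$. For \emph{every} subgroup $U\le N$ we have $U\le A$, so $UB$ is a subgroup, and the modular law gives $N\cap UB=U(B\cap N)=U$; since $N\trianglelefteq G$ this forces $U=N\cap UB\trianglelefteq UB$, so $B$ normalises every subgroup of $N$, that is, $B$ induces a group of power automorphisms on $N$. If $N$ is nonabelian it has trivial centre, so it admits no nontrivial power automorphism and $N\le C_G(B)$. If $N$ is abelian and not cyclic, its power automorphisms are the scalar maps, and a further argument --- exploiting that $N$ is minimal normal in all of $G$, not merely in $NB$, and bringing in the action of $A$ --- excludes a nontrivial scalar, again yielding $N\le C_G(B)$; this refinement is the delicate point of part 5. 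When $N$ is cyclic of prime order one instead notes that $G/C_G(N)$ embeds in the cyclic group $\operatorname{Aut}(N)$ and concludes that $N\le C_G(A)$ or $N\le C_G(B)$.

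Finally, part 3 follows by combining parts 4 and 5. If $N\le A\cap B$ there is nothing to prove. Otherwise, by part 4, $N$ lies in at most one of $A$ and $B$; if, say, $N\le A$ and $B\cap N=1$, then part 5 gives $N\le C_G(A)$ or $N\le C_G(B)$, and in either case $[N,A\cap B]=1$. In the one remaining situation --- $N$ abelian and contained in neither factor, so $N\cap A=N\cap B=1$ --- one shows by a separate short argument that $A\cap B$ centralises $N$, so again $[N,A\cap B]=1$.
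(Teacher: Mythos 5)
First, a point of order: the paper does not prove this statement. It is quoted verbatim from the monograph of Ballester--Bolinches, Esteban--Romero and Asaad \cite{bal} (their Lemma~4.3.3), so there is no proof in the paper to compare yours against; your attempt has to stand on its own.

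As it stands it is an outline whose hard steps are all deferred. Your toolkit is the right one, and several reductions are genuinely correct: deriving $AN,BN\in\{N,G\}$ in part~1 by applying part~4 to the simple group $G/N$, which is its own unique minimal normal subgroup; the route to $(A\cap B)N\in\{N,G\}$ via Ore's theorem (using that $A\cap B$ permutes with every subgroup of $A$ and of $B$ by the modular law, and that every subgroup of $G/N=AN/N$ has the form $(W\cap A)N/N$); and, in part~5, the clean observation that $U=N\cap UB\trianglelefteq UB$ for every $U\le N\le A$, so that $B$ induces power automorphisms on $N$ and therefore centralises $N$ whenever $Z(N)=1$. But the load-bearing claim---the dichotomy $\{A\cap N,B\cap N\}\subseteq\{1,N\}$ of part~4, from which you derive everything else---is not proved, and the induction you propose for it does not go through: passing to $G/L$ via Lemma~\ref{lem:factor} is useless precisely in the critical case where $N$ is the unique minimal normal subgroup of $G$ (the only available quotient kills $N$), and when $L\neq N$ the inductive hypothesis applied to the minimal normal subgroup $NL/L$ only controls $AL\cap NL=(A\cap NL)L$, not $A\cap N$, so the conclusion does not pull back. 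You yourself flag this as ``the main obstacle,'' but you offer no mechanism to overcome it, and it is exactly where the architecture of mutually permutable products must enter. The same applies to the other deferred points: the exclusion of a nontrivial scalar action when $N$ is abelian non-cyclic (the second clause of part~5), the unrefined disjunction $N\le C_G(A)$ or $N\le C_G(B)$ for cyclic $N$, and the case $A\cap N=B\cap N=1$ of part~3. These are the actual content of the lemma, so the proposal, while pointed in the right direction, is not yet a proof.
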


\begin{lemma} [\cite{bal}]
\label{lem:core}
Let the group $G = AB$ be the product of the mutually permutable subgroups $A$ 
and $B$ and let $\mathfrak F$ be a saturated formation containing the class 
$\mathfrak U$ of all supersoluble groups. If $(A \cap B)_G = 1$, then
$G \in \mathfrak F$ if and only if $A\in \mathfrak F$ and $B \in \mathfrak F$. 
\end{lemma}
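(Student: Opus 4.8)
The plan is to argue by induction on $|G|$, along the lines of the proof in~\cite{bal}, treating both implications together; the base case $G=1$ is trivial. The tools are Lemma~\ref{lem:factor} (mutual permutability is inherited by quotients), Lemma~\ref{lem:lemma_4.3.3} (which locates a minimal normal subgroup of $G$ with respect to $A$, $B$, $C_G(A)$ and $C_G(B)$), the observation that a chief factor of prime order $p$ is $\mathfrak U$-central and hence $\mathfrak F$-central because $\mathfrak U\subseteq\mathfrak F$, and the standard fact that for a saturated formation $\mathfrak F$ a group $G$ with $G/N\in\mathfrak F$ and $N\leq Z^{\mathfrak F}_{\infty}(G)$ lies in $\mathfrak F$. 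Fix a minimal normal subgroup $N$ of $G$. By Lemma~\ref{lem:lemma_4.3.3}(4), $N\cap A$ and $N\cap B$ lie in $\{1,N\}$, and since $(A\cap B)_G=1$ with $N$ being $G$-invariant we get $N\not\leq A\cap B$, so $N$ is contained in at most one of the factors (and in exactly one if $N$ is non-abelian, by Lemma~\ref{lem:lemma_4.3.3}(2)). By Lemma~\ref{lem:factor}, $AN/N$ and $BN/N$ are mutually permutable with product $G/N$, and $AN/N\cong A/(A\cap N)$, $BN/N\cong B/(B\cap N)$. A first delicate point is to guarantee that the inductive hypothesis applies to $G/N$, i.e.\ that $(AN/N\cap BN/N)_{G/N}=1$: here one uses Lemma~\ref{lem:lemma_4.3.3}(3), namely that every minimal normal subgroup of $G$ centralizes $A\cap B$, so that a nontrivial core in $G/N$ would place $N$ in an exceptional position which must be isolated and dealt with separately (equivalently, one carries $|A\cap B|$ as a secondary induction parameter).

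Suppose first that $A,B\in\mathfrak F$. Then $AN/N$ and $BN/N$ lie in $\mathfrak F$, being quotients, so $G/N\in\mathfrak F$ by induction, and it remains to show that $N$ is $\mathfrak F$-central in $G$, since then $N\leq Z^{\mathfrak F}_{\infty}(G)$ and $G\in\mathfrak F$. If $|N|=p$ this is immediate from $\mathfrak U\subseteq\mathfrak F$. Otherwise $N$ is not cyclic, and --- outside the exceptional configuration where $N$ lies in neither factor --- we may relabel the factors so that $N\leq A$ and $N\cap B=1$; then Lemma~\ref{lem:lemma_4.3.3}(5) gives $N\leq C_G(B)$, whence $B\leq C_G(N)$, $C_G(N)=C_A(N)B$, $A\cap B\leq C_A(N)$, and therefore $G/C_G(N)\cong A/C_A(N)$, acting on $N$ exactly as $A$ does by conjugation. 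Since $A\in\mathfrak F$ and $\mathfrak F$ is saturated, every $A$-chief factor inside $N$ is $\mathfrak F$-central in $A$; transporting this along the isomorphism just obtained --- and invoking saturation of $\mathfrak F$ once more inside $N\leftthreetimes G/C_G(N)$, together with the composition-formation description of $\mathfrak F$ when $N$ is non-abelian --- yields that $N$ is $\mathfrak F$-central in $G$. The non-abelian subcase is vacuous when $\mathfrak F=\mathfrak U$.

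Conversely, suppose $G\in\mathfrak F$; by symmetry it suffices to prove $A\in\mathfrak F$. As $G/N$ is a quotient of $G$ it lies in $\mathfrak F$, so by induction $AN/N\cong A/(A\cap N)\in\mathfrak F$. If $N\cap A=1$ then $A\cong AN/N\in\mathfrak F$. If $N\leq A$, then $A/N\in\mathfrak F$ and it is enough to show $N\leq Z^{\mathfrak F}_{\infty}(A)$: for $|N|=p$ this follows again from $\mathfrak U\subseteq\mathfrak F$, and otherwise Lemma~\ref{lem:lemma_4.3.3}(5) gives $N\leq C_G(B)$ and hence the isomorphism $G/C_G(N)\cong A/C_A(N)$, so that the $\mathfrak F$-centrality of $N$ already established in $G$ descends verbatim to $A$. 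Thus $A\in\mathfrak F$.

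The step I expect to be the main obstacle is this transfer of information between $G$ and the factors: because $A$ and $B$ need not be subnormal in $G$, neither membership in $\mathfrak F$ nor $\mathfrak F$-centrality of chief factors passes freely among $G$, $A$, $B$ and $G/N$, so at each stage one must call on Lemma~\ref{lem:lemma_4.3.3} to fix the position of $N$ and then compute $C_G(N)$ precisely enough to exhibit $G/C_G(N)$ as a section of a single factor. The accompanying technical hurdle is the inheritance of $(A\cap B)_G=1$ by $G/N$ --- which is what forces the exceptional configurations, notably a non-cyclic abelian minimal normal subgroup contained in neither factor, to be handled apart --- and it is precisely for the cyclic minimal normal subgroups, where Lemma~\ref{lem:lemma_4.3.3}(5) gives no control over centralizers, that the hypothesis $\mathfrak U\subseteq\mathfrak F$ is indispensable.
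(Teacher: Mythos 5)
The paper does not actually prove this statement: Lemma~\ref{lem:core} is quoted, with a citation, from the monograph~\cite{bal}, where it is a rather deep theorem whose proof occupies several pages and rests on a detailed structural analysis of mutually permutable products with core-free intersection. So there is no in-paper proof to compare against; your proposal has to stand on its own, and as written it does not.

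Your induction never gets off the ground, because the hypothesis $(A\cap B)_G=1$ is exactly the one that fails to pass to quotients. To apply the inductive hypothesis to $G/N=(AN/N)(BN/N)$ you need $(AN/N\cap BN/N)_{G/N}=1$; but $AN/N\cap BN/N$ can be strictly larger than $(A\cap B)N/N$, and even the latter can acquire a nontrivial core modulo $N$. You acknowledge this (``a first delicate point'') but your proposed remedies --- ``isolate the exceptional position'' or ``carry $|A\cap B|$ as a secondary induction parameter'' --- are placeholders, not arguments; all that Lemma~\ref{lem:lemma_4.3.3}(3) gives you under the core-free hypothesis is $A\cap B\leq C_G(\mathrm{Soc}(G))$, which does not control the core of the intersection in $G/N$. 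This is precisely where the proof in~\cite{bal} has to do real work. The second unclosed gap is the ``exceptional configuration'' of a non-cyclic abelian minimal normal subgroup $N$ with $N\cap A=N\cap B=1$: Lemma~\ref{lem:lemma_4.3.3}(2) rules this out only for non-abelian $N$, so for abelian $N$ the configuration genuinely occurs, and then you have none of the centralizer computations ($N\leq C_G(B)$, $G/C_G(N)\cong A/C_A(N)$) on which both directions of your argument rely; you explicitly set this case aside and never return to it. A smaller but real issue is the non-abelian case of the forward direction: $\mathfrak F$ is only assumed saturated, and $\mathfrak F$-centrality of a non-abelian $N$ in the sense $N\leftthreetimes G/C_G(N)\in\mathfrak F$ does not drop out of $A\in\mathfrak F$ by ``invoking saturation once more''; it needs the canonical composition satellite and an argument about the $A$-chief factors of $N$, which you gesture at but do not supply. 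In short, the skeleton is plausible, but every one of the points you yourself identify as ``the main obstacle'' is left open, and those are the points that constitute the theorem.
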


\section{Proof of theorem~\ref{theorem:A}}

In this section we prove the theorem that describes the structure of finite
$ca$-$\mathfrak F$-group.

\begin{lemma}.
\label{lem:bal_1_6_4}
Let $\mathfrak F$ be a soluble formation containing the class $\mathfrak U$ of 
all supersoluble groups. If $G$ is a $ca$-$\mathfrak F$-group then the following 
statements hold:

1. $G^\mathfrak S \leq C_G(G_\mathfrak S)$;

2. $(G^\mathfrak S)_\mathfrak S \leq Z(G^\mathfrak S)$.
\end{lemma}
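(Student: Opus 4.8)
The plan is to prove statement~1 directly and to deduce statement~2 from it in a single line. The key preliminary remark for statement~1 is that the soluble residual $G^{\mathfrak S}$ is a \emph{perfect} group: $(G^{\mathfrak S})'$ is characteristic in $G^{\mathfrak S}$, hence normal in $G$, and $G/(G^{\mathfrak S})'$ is an extension of the abelian group $G^{\mathfrak S}/(G^{\mathfrak S})'$ by the soluble group $G/G^{\mathfrak S}$, so it is soluble; minimality of $G^{\mathfrak S}$ then forces $G^{\mathfrak S}=(G^{\mathfrak S})'$.

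Next I would fix a chief series of $G$ running through the soluble radical, $1=G_0<G_1<\dots<G_m=G_{\mathfrak S}<\dots<G$. Since $G_{\mathfrak S}$ is soluble, every factor $G_i/G_{i-1}$ with $i\le m$ is an abelian chief factor of $G$, hence $\mathfrak F$-central, so $G_i/G_{i-1}\leftthreetimes G/C_G(G_i/G_{i-1})\in\mathfrak F\subseteq\mathfrak S$; in particular $G/C_G(G_i/G_{i-1})$ is soluble, and the definition of the soluble residual yields $G^{\mathfrak S}\le C_G(G_i/G_{i-1})$ for all $i\le m$. Thus $G^{\mathfrak S}$ lies in the stabiliser $T=\bigcap_{i=1}^{m}C_G(G_i/G_{i-1})$ of this normal chain of $G_{\mathfrak S}$.

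Now I would invoke the classical stability theorem of P.~Hall (see \cite{dh}): $T/C_G(G_{\mathfrak S})$ is nilpotent. Consequently $G^{\mathfrak S}C_G(G_{\mathfrak S})/C_G(G_{\mathfrak S})$, being a subgroup of a nilpotent group, is soluble, while at the same time it is a homomorphic image of the perfect group $G^{\mathfrak S}$, hence perfect; a perfect soluble group is trivial, so $G^{\mathfrak S}\le C_G(G_{\mathfrak S})$, which is statement~1. (One can also avoid Hall's theorem by climbing the chief series of $G_{\mathfrak S}$ with repeated use of Lemma~\ref{lem:lemma1}, placing $G/C_G(G_{\mathfrak S})$ in a soluble product formation of the form $\mathfrak N_{p_1}\cdots\mathfrak N_{p_m}\mathfrak F$, but the argument above looks shorter.) For statement~2 it then suffices to note that $(G^{\mathfrak S})_{\mathfrak S}$ is characteristic in $G^{\mathfrak S}$ and $G^{\mathfrak S}$ is normal in $G$, so $(G^{\mathfrak S})_{\mathfrak S}$ is a soluble normal subgroup of $G$ and hence $(G^{\mathfrak S})_{\mathfrak S}\le G_{\mathfrak S}$; combining this with statement~1 gives $[G^{\mathfrak S},(G^{\mathfrak S})_{\mathfrak S}]\le[G^{\mathfrak S},G_{\mathfrak S}]=1$, that is, $(G^{\mathfrak S})_{\mathfrak S}\le Z(G^{\mathfrak S})$.

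I do not expect a serious obstacle here: the substance of the proof is carried by the two standard facts that $G^{\mathfrak S}$ is perfect and that the stabiliser of a normal chain is nilpotent, and the only points to watch are that the factors below $G_{\mathfrak S}$ are genuine $G$-chief factors (so that the $ca$-$\mathfrak F$ hypothesis applies) and that it is precisely the solubility of $\mathfrak F$ which converts ``$\mathfrak F$-central'' into ``centralised modulo a soluble quotient''. Once $G^{\mathfrak S}$ is recognised as perfect, the clash with nilpotency (hence solubility) of the relevant section is immediate, and statement~2 is pure bookkeeping.
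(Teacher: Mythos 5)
Your argument is correct, and for statement~1 it takes a genuinely different route from the paper. The paper also begins by observing that every $G$-chief factor below $G_{\mathfrak S}$ is abelian, hence $\mathfrak F$-central, so that $G_{\mathfrak S}\leq Z^{\mathfrak F}_{\infty}(G)$; but it then finishes in one line by quoting the standard fact (Corollary~9.3.2 in Shemetkov's book, cited as \cite{shem2}) that $G^{\mathfrak F}\leq C_G(Z^{\mathfrak F}_{\infty}(G))$, together with $G^{\mathfrak S}\leq G^{\mathfrak F}$, which holds because $\mathfrak F\subseteq\mathfrak S$. You instead extract solubility of each $G/C_G(G_i/G_{i-1})$ directly from $\mathfrak F$-centrality, place $G^{\mathfrak S}$ in the stabiliser of the chain, and kill the image of $G^{\mathfrak S}$ in the nilpotent stability quotient by the perfectness of the soluble residual. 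Both arguments are sound; the paper's is shorter given the hypercenter machinery it already relies on elsewhere, while yours is more self-contained, resting only on the Kaloujnine--Hall stability theorem and the elementary fact that $G^{\mathfrak S}$ is perfect (and, like the paper's, it never uses the hypothesis $\mathfrak U\subseteq\mathfrak F$). Your derivation of statement~2 from statement~1 --- $(G^{\mathfrak S})_{\mathfrak S}$ is characteristic in $G^{\mathfrak S}$, hence a soluble normal subgroup of $G$, hence inside $G_{\mathfrak S}$ and therefore centralised by $G^{\mathfrak S}$ --- is exactly the paper's.
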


\begin{proof}
Prove the statement~1. Obviously that all chief factors of group $G$ below
subgroup $G_\mathfrak S$ are $\mathfrak F$-central. Hence subgroup
$G_\mathfrak S$ is $\mathfrak F$-hypercentral and thus it is subgroup of 
$\mathfrak F$-hypercenter $Z^{\mathfrak F}_\infty(G)$. By 
Corollary~9.3.2~\cite{shem2} we have that
$G^{\mathfrak F} \leq C_G(Z^{\mathfrak F}_\infty(G))$. Since $\mathfrak F$ is a 
soluble formation, $G^{\mathfrak S}\leq G^{\mathfrak F} \leq C_G(G_\mathfrak S)$.

Prove the statement~2. Let $R = (G^\mathfrak S)_\mathfrak S$. Since
$R$ char $G^\mathfrak S \unlhd G$, it follows $R \unlhd G$. Therefore
$R \leq G_\mathfrak S$. Hence $G^\mathfrak S \leq C_G(R)$ by statement 1 of the 
Lemma. The statement 2 is true. 
\end{proof}
 
\begin{proof}[Proof of theorem~\ref{theorem:A}]
Denote by $D$ the soluble residual $G^\mathfrak S$ of group $G$.

Let $G$ be a $ca$-$\mathfrak F$-group. If $G$ is soluble, then $D = 1$
and $G \in \mathfrak F$. So $G$ satisfies the Statements 1, 2, and 3.
We assume that group $G$ is not soluble. Then $D \neq 1$. 

Since $\mathfrak F$ is a soluble formation,
$G/G^\mathfrak F \in \mathfrak F \subseteq \mathfrak S$. Hence
$D \subseteq G^\mathfrak F$. Since $\mathfrak F_{ca}$ is a formation, it follows
$G / D \in \mathfrak F_{ca}$. By solvability of quotient $G / D$ we have that
$G / D \in \mathfrak F$. Hence $G^\mathfrak F \subseteq D$ and
$D = G^\mathfrak F$. The Statement 1 holds. Note that all chief factors of $G$ 
below $Z(D)$ are abelian and therefore are $\mathfrak F$-central. This means, 
that $Z(D)$ is $\mathfrak F$-hypercentral and the Statement 3 holds.

We show that $D / Z(D)$ is a direct product of $G$-invariant simple groups.

Assume that $Z(D) = 1$. Let $N_1$ be a minimal normal subgroup of $G$ contained 
in $D$. If $N_1$ is abelian, then it follows from $N_1 \leq D_\mathfrak S$ and 
the Statement 2 of Lemma~\ref{lem:bal_1_6_4} that $N_1 \leq Z(D) = 1$. Hence 
$N_1$ is non-abelian. Since $G \in \mathfrak F_{ca}$, we have that $N_1$ is a
simple. 
Note that $G / C_G(N_1)$ is isomorphic to a subgroup of 
$\mathrm{Aut}(N_1)$ and 
$N_1C_G(N_1)/C_G(N_1)$ is isomorphic to $\mathrm{Inn}(N_1)$. So
$G / N_1C_G(N_1) \simeq (G / C_G(N_1)) / (N_1C_G(N_1) / C_G(N_1))$ is isomorphic 
to a subgroup of $\mathrm{Aut}(N_1) / \mathrm{Inn}(N_1)$. From the validity of 
the Schreier conjecture, it follows that $G / N_1C_G(N_1)$ is soluble. 
Then $D \leq N_1C_G(N_1)$. 
Hence $D = D\cap N_1C_G(N_1) = N_1(D\cap C_G(N_1)) = N_1C_D(N_1)$ and
$N_1 \cap C_D(N_1) = 1$. If $D = N_1$, then the Statement 2 holds. Assume that
$D$ is not simple. Therefore, $C_D(N_1) \neq 1$. The Statement 2 holds in the
case when $C_D(N_1)$ is simple. Assume that $C_D(N_1)$ is not a simple
and let $N_2$ be a minimal normal subgroup of $G$ contained in $C_D(N_1)$. Since 
$Z(D) = 1$ and the Statement 2 of lemma~\ref{lem:bal_1_6_4}, it follows that 
$N_2$ is a simple non-abelian subgroup. By the above $D = N_2C_D(N_2)$. 
By Dedekind identity $C_D(N_1) = C_D(N_1) \cap N_2C_D(N_2) = $
$N_2(C_D(N_1) \cap C_D(N_2)) = N_2C_L(N_2)$, where $L = C_D(N_1) \cap C_D(N_2)$. 
Then $D = N_1N_2C_L(N_2)$. Applying above to $C_D(N_2)$ and etc.
we can conclude that $D = N_1 \times N_2 \times \cdots \times N_t$ is the direct
product of minimal normal subgroups of $G$, each of them simple, as desired. 
So the Statement 2 holds.

Let $Z(D) \neq 1$. Since $G/Z(D) \in \mathfrak F_{ca}$ and
$(G/Z(D))^\mathfrak S = D/Z(D)$, the Statement 1 and 3 holds for $G/Z(D)$.
Denote $T / Z(D) = Z(D / Z(D))$. Then $T$ is a normal soluble subgroup of $D$.
By lemma~\ref{lem:bal_1_6_4} $T$ is contained in the center $Z(D)$.
Therefore $Z(D / Z(D)) = 1$. By the above the Statement 1 holds for $G/Z(D)$. 

Conversely, assume that a group $G$ satisfies the Statements 1, 2 and 3.
We consider a chief series of $G$ which passes through the subgroup $D=G^{\mathfrak S}$.
 Note the all chief factors 
above $D$ are abelian and $\mathfrak F$-central. By the Statement~2 the quotient
$D / Z(D)$ is the direct product of minimal normal subgroups of $G / Z(D)$, 
which are simple. All chief factors of $G$ below $Z(D)$ are
$\mathfrak F$-central by the Statement 3. By virtue of Jordan-Holder's theorem 
for groups with operators~\cite[A, 3.2]{dh} and the Definition~\ref{def:fca},
$G \in \mathfrak F_{ca}$.
\end{proof}

\section{Proof of theorem~\ref{theorem:B} and~\ref{theorem:C}}

In this section we prove some properties of the mutually permutable products of
$ca$-$\mathfrak F$-groups.

\begin{proof}[Proof of Theorem~\ref{theorem:B}]
Assume that that this theorem is false and let $G$ be a counterexample of minimal 
order. Let $N$ be a minimal normal subgroup of $G$. If $N = G$, then $G$ is 
simple. Hence $H \in \mathfrak F_{ca}$ and
$K \in \mathfrak F_{ca}$. Assume $N \neq G$. By Lemma~\ref{lem:factor}
$G/N = HN/N \cdot KN/N$ is the mutually permutable product of subgroups $HN/N$
and $KN/N$ of $G/N$. Note that $G/N \in \mathfrak F_{ca}$. Then all 
conditions of the theorem hold for $G/N$. Therefore
$HN/N \simeq H/(H \cap N) \in \mathfrak F_{ca}$ and 
$KN/N \simeq K/(K \cap N) \in \mathfrak F_{ca}$. Since
$\mathfrak F_{ca}$ is a formation by Lemma~\ref{lem:formation}, it
follows that $N$ is the unique minimal normal subgroup of $G$.

Let $N$ be a non-abelian group. Then $N$ is simple. According to 
Lemma~\ref{lem:lemma_4.3.3} we should consider the following cases.

1. Let $H \cap N = K \cap N = N$. Then $N \leq H \cap K$,
$H/(H \cap N) = H/N \in \mathfrak F_{ca}$ and
$K/(K \cap N) = K/N \in \mathfrak F_{ca}$. Hence $H$ and $K$ are 
$ca$-$\mathfrak F$-groups, a contradiction.

2. Let $H \cap N = K \cap N = 1$. Then $H/(H \cap N) \simeq H$ and
$K/(K \cap N) \simeq K$ are $ca$-$\mathfrak F$-groups, a contradiction.

3. Let $H \cap N = N$ and $K \cap N = 1$. Then
$H/(H \cap N) = H/N \in \mathfrak F_{ca}$ and $H$ is a
$ca$-$\mathfrak F$-group and $K/(K \cap N) \simeq K$ is a
$ca$-$\mathfrak F$-group. A contradiction.

4. Let $H \cap N = 1$ and $K \cap N = N$. This case is considered similarly to 
the case~3. 
\end{proof}

 To prove the Theorem C we need the following results.

\begin{lemma}
\label{lem:minimal}
Let the group $G$ has the unique minimal normal subgroup
$N = N_1 \times \dots \times N_t$ and $N_i$ are isomorphic simple non-abelian
groups for all $i = 1, \dots, t$. If $N \subseteq H$, where $H$ is a
$ca$-$\mathfrak F$-subgroup of $G$, then $N_i \lhd H$ for all $i = 1, \dots, t$.
\end{lemma}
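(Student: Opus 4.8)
The plan is to exploit the fact that inside a $ca$-$\mathfrak F$-group every non-abelian chief factor is simple, applied to a suitable chief series of $H$ passing through $N$. Since $N = N_1 \times \dots \times N_t$ is the unique minimal normal subgroup of $G$ and is contained in $H$, I would first locate $N$ relative to the chief factors of $H$. Pick a chief series of $H$ refining $1 \lhd N \lhd H$. The quotient $N$, regarded as a normal subgroup of $H$, may not be $H$-irreducible, so it breaks into $H$-chief factors; call the first one $M/1 = M$, a minimal normal subgroup of $H$ contained in $N$. Because $N$ is a direct product of non-abelian simple groups, $M$ is non-abelian, hence — $H$ being a $ca$-$\mathfrak F$-group — $M$ is simple. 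A minimal normal subgroup of $H$ that is simple and contained in the socle component $N_1 \times \dots \times N_t$ must, by the structure of direct products of simple groups, coincide with one of the factors $N_i$; so $M = N_i$ for some $i$, and in particular $N_i \lhd H$.

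The second step is to propagate this to all the $N_i$. Since $N$ is the \emph{unique} minimal normal subgroup of $G$ and the $N_i$ are permuted transitively by $G$ (they are the components of a minimal normal subgroup), for each $j$ there is $g \in G$ with $N_j = N_i^{\,g}$. The subtlety is that $g$ need not normalise $H$, so I cannot simply conjugate the relation $N_i \lhd H$ by $g$. Instead I would iterate the chief-series argument: having shown $N_i \lhd H$, pass to $N/N_i$ as a normal section of $H$; if it is nontrivial it again has a minimal $H$-invariant subgroup, which is a non-abelian (hence simple, since $H \in \mathfrak F_{ca}$) chief factor of $H$ lying inside $(N_1 \times \dots \times N_t)/N_i$, and therefore equals $N_kN_i/N_i$ for some $k \neq i$, giving $N_k \lhd H$. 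Repeating this finitely many times exhausts all the $N_j$, so $N_j \lhd H$ for every $j = 1, \dots, t$, which is the claim.

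The main obstacle I anticipate is the step identifying a simple minimal normal subgroup (or simple chief factor) of $H$ sitting inside $N_1 \times \dots \times N_t$ with one of the $N_i$. This is a standard fact about direct products of non-abelian simple groups — any subgroup that is normal in an overgroup acting on the product, and is itself simple, must project isomorphically onto a single coordinate and be trivial on the others, hence coincide with that coordinate — but it has to be invoked carefully: one uses that each $N_i$ is simple non-abelian, so it has no proper nontrivial normal subgroups and $C_{N}(N_i) = \prod_{k \neq i} N_k$, and a simple normal subgroup $M$ of $H$ with $M \leq N$ either centralises $N_i$ or contains it; running over $i$ forces $M = N_i$ for exactly one $i$. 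Everything else is routine: choosing chief series, using that $\mathfrak F_{ca}$ is a formation (Lemma~\ref{lem:formation}) so that quotients of $H$ inherit the $ca$-$\mathfrak F$ property, and the finiteness of $t$ to terminate the iteration.
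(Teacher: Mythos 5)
Your argument is correct in substance, but it takes a genuinely different route from the paper. The paper handles all the factors $N_i$ simultaneously and without any induction: for each $i$ it forms the normal closure $N_i^H$, observes that $N_i$ is subnormal in $G$ and hence in $H$, invokes Isaacs' Lemma~9.17 (the normal closure in $H$ of a subnormal non-abelian simple subgroup is a minimal normal subgroup of $H$) to see that $N_i^H$ is a non-abelian chief factor of the $ca$-$\mathfrak F$-group $H$, hence simple, and then concludes $N_i^H = N_i$ from $N_i \lhd\lhd N_i^H$. Your version replaces this with a chief series of $H$ through $N$ plus an iteration over the quotients $H/(N_{i_1}\cdots N_{i_s})$, using that $\mathfrak F_{ca}$ is quotient-closed; this is more elementary (no appeal to Isaacs) at the cost of being longer. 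One step you should make explicit: the iteration only gives $N_kN_i/N_i \lhd H/N_i$, i.e. $N_iN_k \lhd H$, and passing from this to $N_k \lhd H$ needs an extra observation, e.g.\ $N_k = C_{N_iN_k}(N_i)$ (since $Z(N_i)=1$) is normal in $H$ because both $N_iN_k$ and $N_i$ are, or equivalently that $H$ permutes the two minimal normal subgroups $N_i$, $N_k$ of $N_iN_k$ and already fixes $N_i$. With that line added, your identification of a simple minimal normal subgroup of $H$ inside $N$ with one of the coordinates $N_i$ (via the standard description of normal subgroups of a direct product of non-abelian simple groups) is sound, and the finiteness of $t$ terminates the induction as you say.
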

\begin{proof}
Let $i \in\{1, \dots, t\}$. Consider normal closure
$N_i^H= \langle {N_i}^x | x\in H\rangle$ of subgroup $N_i$ in $H$. Note that 
$N_i \lhd \lhd \ G$. Hence $N_i \lhd \lhd \ H$. By the Lemma~9.17~\cite{isaacs} 
we have that $N_i^H$ is a minimal normal subgroup of $H$. Since subgroup $N_i^H$
is non-abelian and isomorphic to the chief factor of $ca$-$\mathfrak F$-subgroup
$H$, then $N_i^H$ is simple. Then, by $N_i \lhd \lhd \ N_i^H$ we have
that $N_i^H = N_i$. Hence $N_i \lhd H$ for all $i = 1, \dots, t$.
\end{proof}

\begin{lemma}
\label{lem:equal}
Let $\mathfrak F$ be a composition formation and $f$ is an inner composition
satellite of $\mathfrak F$. Let a group $G$ has the unique minimal normal 
subgroup $N$ and $N$ is an abelian $p$-group for some prime $p$. The chief factor $N$ of 
$G$ is $\mathfrak F$-central in $G$ if and only if $G / C_G(N) \in f(p)$.
\end{lemma}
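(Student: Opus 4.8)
The plan is to test the $\mathfrak F$-centrality of $N$ by examining the chief factors of the semidirect product $X = N \leftthreetimes Q$, where $Q = G/C_G(N)$ acts on $N$ in the natural way. First I would record the structure of $X$. Since $N$ is a minimal normal subgroup of $G$, the group $G$, and hence $Q$, acts irreducibly on $N$; moreover this action is faithful, so $C_Q(N) = 1$ and therefore $C_X(N) = N$, while $N$ remains a minimal normal subgroup of $X$. Thus $N$ is a chief factor of $X$ whose associated simple group has order $p$, and $X/C_X(N) = X/N \cong Q$. Note that $X = N \leftthreetimes Q$ is, by definition, exactly the group whose membership in $\mathfrak F$ expresses that ``$N$ is $\mathfrak F$-central in $G$''.

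For the implication ``$N$ is $\mathfrak F$-central $\Rightarrow$ $Q \in f(p)$'': if $X = N \leftthreetimes Q \in \mathfrak F = CLF(f)$, then applying the defining condition of $CLF(f)$ to the chief factor $N$ of $X$ gives $X/C_X(N) \in f(p)$, that is, $G/C_G(N) = Q \in f(p)$. This direction uses only that $f$ is a composition satellite of $\mathfrak F$.

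For the converse, assume $Q \in f(p)$; I must show $X \in CLF(f)$. I would fix a chief series of $X$ passing through $N$; by the Jordan--H\"older theorem for groups with operators \cite[A, 3.2]{dh} it suffices to verify the $CLF(f)$-condition on the factors of this one series. Its bottom factor is $N$, and $X/C_X(N) = X/N \cong Q \in f(p)$, as required. Every other factor has the form $U/V$ with $N \le V < U \unlhd X$; since $N \unlhd X$ we have $[N,U] \le N \le V$, so $N \le C_X(U/V)$ and hence $X/C_X(U/V) \cong Q/C_Q(\overline{U}/\overline{V})$, where $\overline{U}/\overline{V}$ is the chief factor of $Q \cong X/N$ corresponding to $U/V$. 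Because $f$ is an \emph{inner} composition satellite, $f(p) \subseteq CLF(f) = \mathfrak F$, so $Q \in \mathfrak F$; applying the $CLF(f)$-condition to the chief factor $\overline{U}/\overline{V}$ of $Q$ then yields $Q/C_Q(\overline{U}/\overline{V}) \in f(A)$ for the simple group $A$ associated with $U/V$, hence $X/C_X(U/V) \in f(A)$. Thus every factor of the chosen chief series of $X$ satisfies the $CLF(f)$-condition, so $X \in \mathfrak F$ and $N$ is $\mathfrak F$-central in $G$.

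The only real work is in the converse: one must see that $N$ centralises every chief factor of $X$ lying above it, so that all such factors are inherited from $Q \cong X/N$, and one must recognise that the hypothesis ``$f$ inner'' is precisely what upgrades $Q \in f(p)$ to $Q \in \mathfrak F$, after which the upper factors take care of themselves. The uniqueness of $N$ is not needed for this argument; only that $N$ is a minimal normal abelian $p$-subgroup of $G$, which is what makes $Q$ act irreducibly and faithfully on it.
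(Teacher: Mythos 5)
Your argument is correct and takes essentially the same route as the paper: both form the semidirect product $R = N \leftthreetimes G/C_G(N)$, observe that $N$ is minimal normal in $R$ with $C_R(N)=N$, and read the $CLF(f)$-condition off the chief factor $N$ in both directions. You in fact supply more detail than the paper for the implication $G/C_G(N)\in f(p)\Rightarrow R\in\mathfrak F$ (the paper simply asserts ``Hence $R\in\mathfrak F$''), and your verification that the chief factors of $R$ above $N$ are centralised by $N$ and inherited from $Q\in f(p)\subseteq\mathfrak F$ is exactly the justification that step needs.
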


\begin{proof}
Let $G / C_{G}(N) \in f(p)$. Consider semidirect product
$R = N \leftthreetimes G/C_G(N)$. Note that $N$ is the unique minimal normal
subgroup of $R$ and $C_R(N) = N$. Then
$R/C_R(N) \simeq G/C_G(N) \in f(p) \subseteq \mathfrak F$. Hence
$R \in \mathfrak F$, i.e. the chief factor $N/1$ of $G$ is $\mathfrak F$-central.

Conversely, assume that $N$ is $\mathfrak F$-central chief factor of $G$.
Then $R = N \leftthreetimes G/C_{G}(N) \in \mathfrak F$, where $N$ is the
unique minimal normal subgroup of $R$ and $C_G(N) = N$. Hence
$R/C_R(N) \simeq G/C_G(N) \in f(p)$.
\end{proof}

\begin{lemma}
\label{lem:abelian}
Let $\mathfrak F$ be a formation and
$\mathfrak A(p-1) \subseteq \mathfrak F$. Let $G=HK$ be the mutually permutable 
products of subgroup $H$ and $K$, where $H, K \in \mathfrak N_p \mathfrak F$ and 
$G \in \mathfrak N_p \mathfrak A$. Then $G \in \mathfrak N_p \mathfrak F$.
\end{lemma}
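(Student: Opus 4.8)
The plan is to pass to a convenient quotient and reduce to applying Lemma~\ref{lem:core}, using the structure of groups in $\mathfrak N_p\mathfrak A$ and $\mathfrak N_p\mathfrak F$. First I would let $P = O_p(G)$ be the $p$-core of $G$. Since $G \in \mathfrak N_p\mathfrak A$, the quotient $G/P$ is abelian, and one checks that $P = G^{\mathfrak A}$ is in fact the $\mathfrak A$-residual (the smallest normal subgroup with abelian quotient); more to the point, $G/P$ carries the whole ``top'' of the group. The idea is that the obstruction to $G \in \mathfrak N_p\mathfrak F$ lives above $P$: we have $G \in \mathfrak N_p\mathfrak F$ if and only if $G/P \in \mathfrak F$ (using that $\mathfrak N_p\mathfrak F$ is a formation with $\mathfrak N_p$-residual equal to $G^{\mathfrak N_p\mathfrak F}$, which sits inside $P$ since $G/P$ is already abelian, hence in $\mathfrak F$ once we know $G/P \in \mathfrak F$; conversely a factorisation $\mathfrak N_p\mathfrak F$ forces the $\mathfrak N_p$-residual into $P$ and the quotient into $\mathfrak F$). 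So it suffices to prove $G/P \in \mathfrak F$.

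Next I would push the factorisation down. By Lemma~\ref{lem:factor}, $G/P = (HP/P)(KP/P)$ is a mutually permutable product. Since $H \in \mathfrak N_p\mathfrak F$, we have $HP/P \simeq H/(H\cap P)$; and $H \cap P \supseteq O_p(H)$ contains the $\mathfrak N_p$-residual (up to the part of $H$'s $p$-core that survives), so $HP/P \in \mathfrak F$ — concretely, $H/O_p(H) \in \mathfrak F$ because $H \in \mathfrak N_p\mathfrak F$, and $HP/P$ is a quotient of $H/O_p(H)$, hence lies in the formation $\mathfrak F$. Likewise $KP/P \in \mathfrak F$. Thus $G/P$ is the mutually permutable product of two $\mathfrak F$-subgroups, and $G/P \in \mathfrak N_p\mathfrak A$ with trivial $p$-core, so $G/P$ is abelian-by-nothing, i.e. $G/P \in \mathfrak A$.

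Now I would invoke Lemma~\ref{lem:core}: since $\mathfrak F \supseteq \mathfrak U$, if $((HP/P)\cap(KP/P))_{G/P} = 1$ then $G/P \in \mathfrak F$ follows from $HP/P, KP/P \in \mathfrak F$, and we are done. The remaining case is that the intersection has nontrivial core in $G/P$; call its preimage $L$, so $P < L \trianglelefteq G$ with $L/P$ a nontrivial abelian normal subgroup of $G/P$ contained in both factors. Here I would argue by induction on $|G|$: replace $G$ by $G/M$ for $M$ a minimal normal subgroup of $G$ inside $L/P$'s preimage but above $P$ — this $M$ is a $q$-group for some prime $q \neq p$ (since $G/P$ has trivial $p$-core after quotienting by $O_p$, or more carefully $M$ is central-ish in the abelian quotient part). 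By Lemma~\ref{lem:factor} the images $HM/M$, $KM/M$ are mutually permutable, they lie in $\mathfrak N_p\mathfrak F$, and $G/M \in \mathfrak N_p\mathfrak A$, so by minimality $G/M \in \mathfrak N_p\mathfrak F$; combined with $G/P \in \mathfrak A \subseteq \mathfrak F$ on the relevant section and the fact that $M \cap \Phi$-type arguments control the extension, one assembles $G \in \mathfrak N_p\mathfrak F$.

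The main obstacle I anticipate is precisely this last case, where the cores of the factors in the quotient $G/P$ are nontrivial, so that Lemma~\ref{lem:core} does not apply directly; the technical heart is to show that the abelian sections of $G$ lying above $P$ and inside $H \cap K$ are automatically $\mathfrak F$-acceptable — here one uses $\mathfrak A(p-1) \subseteq \mathfrak F$ together with the fact that such a section, being acted on by a $p$-group-by-abelian group, has automorphism action of exponent dividing $p-1$ when the section is a $p$-group, and is handled by induction when it is a $q$-group for $q\neq p$. I expect the cleanest route is induction on $|G|$ with the base case $P = 1$ (where $G$ is abelian and there is nothing to prove beyond $\mathfrak A \subseteq \mathfrak F$), peeling off minimal normal subgroups and repeatedly applying Lemmas~\ref{lem:factor} and~\ref{lem:core}.
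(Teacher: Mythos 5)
Your reduction to showing $G/P\in\mathfrak F$ for $P=O_p(G)$ is sound, and the claim $HP/P,\,KP/P\in\mathfrak F$ is correct (though it deserves the explicit justification that, $G/P$ being abelian, the image of $O_p(H)$ in $G/P$ is a normal $p$-subgroup of $G/P$ and hence trivial, so $O_p(H)\le P$). But the two steps you lean on after that both fail. First, Lemma~\ref{lem:core} is not available: it requires $\mathfrak F$ to be a \emph{saturated} formation containing $\mathfrak U$, whereas here $\mathfrak F$ is only assumed to be a formation with $\mathfrak A(p-1)\subseteq\mathfrak F$; in the very application this lemma is built for (case 1 of the abelian part of the proof of Theorem~\ref{theorem:C}) the class playing the role of $\mathfrak F$ is $h(p)=f(p)$, which for the ambient formation $\mathfrak U$ is $\mathfrak A(p-1)$ itself and contains $\mathfrak U$ in no sense. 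Second, the case $\bigl((HP/P)\cap(KP/P)\bigr)_{G/P}\neq 1$ is exactly where your argument stops being an argument: induction gives $G/M\in\mathfrak N_p\mathfrak F$ for a single minimal normal subgroup $M$, and a formation yields nothing from one such quotient; the ``$M\cap\Phi$-type arguments'' are not supplied and there is no saturation hypothesis on $\mathfrak N_p\mathfrak F$ to make them work.

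The case split is in fact unnecessary: once $G/P$ is abelian and equals $(HP/P)(KP/P)$ with both factors in $\mathfrak F$, the map $(a,b)\mapsto ab$ is a surjective homomorphism from $(HP/P)\times(KP/P)$ onto $G/P$ precisely because $G/P$ is abelian, so $G/P$ is an epimorphic image of a direct product of $\mathfrak F$-groups and lies in $\mathfrak F$; no core condition and no containment $\mathfrak U\subseteq\mathfrak F$ is needed. The paper takes a different, equally short route: a minimal counterexample has a unique minimal normal subgroup $N$, which must be a $q$-group with $q\neq p$ (otherwise $G/N\in\mathfrak N_p\mathfrak F$ lifts to $G$), whence $O_p(G)=1$, so $G$ is abelian with a unique minimal subgroup, hence a cyclic $q$-group, hence $G=H$ or $G=K$. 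Either repair is easy, but as written your proof has a genuine hole at its central step.
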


\begin{proof}
Assume that this lemma is false and let $G$ be a counterexample of minimal order. 
Let $N$ be a minimal normal subgroup of $G$. We can assume without loss
of generality that $G \neq N$. By Lemma~\ref{lem:factor}
$G/N = HN/N \cdot KN/N$ is the mutually permutable product of subgroups $HN/N$
and $KN/N$ of $G/N$. Note that $HN/N \in \mathfrak N_p \mathfrak F$,
 $HN/N \in \mathfrak N_p \mathfrak F$ and
$G/N \in \mathfrak N_p \mathfrak A$. Then all conditions of 

the Lemma 4.3 hold for
$G/N$. Therefore $G/N \in \mathfrak N_p \mathfrak F$. Since
$ \mathfrak N_p \mathfrak F$ is a formation, it follows that $N$ is the unique 
minimal normal subgroup of $G$. We note that $N$ is a $q$-group for some prime
$q \neq p$. Since $G \in \mathfrak N_p \mathfrak A$ and $O_p(G) = 1$, we have 
that $G \in \mathfrak A$. Therefore $H \in \mathfrak F$ and $K \in \mathfrak F$.
Since $N$ is the unique minimal normal subgroup of $G$, it follows that $G$ is a
cyclic $q$-group. Since $G = HK$, we have that $G = H$ or $G = K$, 
i.e. $G \in \mathfrak F$.
\end{proof}

\begin{proof}[Proof of Theorem~\ref{theorem:C}]
Assume that this theorem is false and let $G$ be a counterexample of minimal 
order. Let $N$ be a minimal normal subgroup of $G$. If $N = G$, then $G$ is
simple. Hence $G \in \mathfrak F_{ca}$. Assume $N \neq G$. By 
Lemma~\ref{lem:factor} $G/N = HN/N \cdot KN/N$ is the mutually permutable 
product of subgroups $HN/N$ and $KN/N$ of $G/N$. Note that the derived subgroup 
$(G/N)'$ of $G/N$ is quasinilpotent. Then all conditions of the Theorem hold for
$G/N$. Therefore $G/N \in \mathfrak F_{\mathrm {ca}}$. Since
$\mathfrak F_{ca}$ is a formation by Lemma~\ref{lem:formation}, it
follows that $N$ is the unique minimal normal subgroup of $G$.

Let $N$ be a non-abelian group. Then $N = N_1 \times \dots \times N_t$, where
$N_i$ are isomorphic simple non-abelian groups for all $i = 1, \dots, t$.
According to Lemma~\ref{lem:lemma_4.3.3} we should consider the following cases.

1. Let $H \cap N = K \cap N = N$. Then $N \subseteq H \cap K$. Since $H$ and
$K$ are $ca$-$\mathfrak F$-subgroups and $N = N_1 \times \cdots \times N_t$, it 
follows that $N_i \lhd H$ and $N_i \lhd K$ by Lemma~\ref{lem:minimal}. Hence
by $G = HK$ we have that $N_i \lhd G$ for all $i = 1, \dots, t$. Since $N$ is
the unique minimal normal subgroup of $G$, it follows that $t = 1$
and $N$ is simple. Since $G/N \in \mathfrak F_{ca}$, we have that
$G \in \mathfrak F_{ca}$. A contradiction.

2. Let $H \cap N = K \cap N = 1$. Then $N = (H \cap N)(K \cap N) = 1$ by 
Lemma~\ref{lem:lemma_4.3.3}(2). A contradiction with choice of $N$.

3. Let $H \cap N = N$ and $K \cap N = 1$. Then $N \subseteq H$ and
$N \leq C_G(H)$ or $N \leq C_G(K)$ by Lemma~\ref{lem:lemma_4.3.3}(5). Since
$N \leq H$ and $N$ is non-abelian, we have that $N \leq C_G(K)$. Since
$N = N_1 \times \dots \times N_t$, it follows that $N_i \leq C_G(K)$ for all
$i = 1, \dots, t$. By $N_i \leq H$ we have that $N_i \lhd H$ for all
$i = 1, \dots, t$ by Lemma~\ref{lem:minimal}. By $G = HK$ we have that
$N_i \lhd G$. Since $N$ is the unique minimal normal subgroup of $G$, it 
follows that $N = N_i$ and $N$ is simple. Since
$G/N \in \mathfrak F_{ca}$, we have that
$G \in \mathfrak F_{ca}$. A contradiction.

4. Let $H \cap N = 1$ and $K \cap N = N$. This case is considered similarly to 
the case~3. 

Assume $N$ is an abelian group. Then $N$ is a $p$-group for some prime $p$.
By Theorem~\ref{theorem:composition} formation $\mathfrak F_{ca}$ has the
maximal inner composition satellite $h$ such that $h(N) = f(p)$, where $f$ is a
maximal inner local function of $\mathfrak F$. According to
Lemma~\ref{lem:lemma_4.3.3} we should consider the following cases.

1. Let $H \cap N = K \cap N = N$. Then $N \subseteq H \cap K$. Let $U / V$ is 
any $H$-chief factor of $N$. Since $H \in \mathfrak F_{ca}$, it follows that
$H / C_H(U/V) \in h(p)$. By Lemma~\ref{lem:lemma1} we have that
$H / C_H(N) \in \mathfrak N_ph(p)= h(p)$. Similarly we can show that
$K / C_K(N) \in \mathfrak N_ph(p) = h(p)$. Note the group
$G / C_G(N) = HC_G(N) / C_G(N) \cdot KC_G(N)/ C_G(N)$ is the mutually permutable
product of subgroups $HC_G(N) / C_G(N)$ and $KC_G(N)/ C_G(N)$ of $G / C_G(N)$.

Since $N \subseteq G'$ and $G'$ is quasinilpotent, it follows that
$G'/C_{G'}(N) \in \mathfrak N_p$ by Lemma~\ref{lem:lemma1}. So
$(G / C_G(N))' = G'C_G(N)/C_G(N) \simeq G'/C_{G'}(N) $ is a $p$-group.
Since $G / C_G(N) / (G / C_G(N))' \in \mathfrak A$, it follows that
$G / C_G(N) \in \mathfrak N_p\mathfrak A$. By Lemma~\ref{lem:abelian} for
$G / C_G(N)$ we have that
$G / C_G(N) \in \mathfrak N_ph(p)= h(p) \subseteq \mathfrak F_{ca}$.
Therefore $G \in \mathfrak F_{ca}$. A contradiction.

2. Let $H \cap N = K \cap N = 1$. Then $N \notin H \cap K$ and
$(H \cap K)_G = 1$. If $H^\mathfrak S = 1$ and $K^\mathfrak S = 1$, then $H$ and
$K$ are soluble. Hence $H \in \mathfrak F$ and $K \in \mathfrak F$. By 
Lemma~\ref{lem:core} we have that
$G \in \mathfrak F \subseteq \mathfrak F_{ca}$, a contradiction. 
Hence we can assume without loss of generality that $H^\mathfrak S \neq 1$. 
Then $H^\mathfrak S \trianglelefteq G$ by Corollary~4.3.6~\cite{bal}. Hence
$N \leq H^\mathfrak S$. Therefore $N \leq H \cap N = 1$, a contradiction.

3. Let $H \cap N = N$ and $K \cap N = 1$. Assume that $N$ is non-cyclic subgroup.
Then $N \leq C_G(K)$ by Lemma~\ref{lem:lemma_4.3.3}(5). Hence
$K \subseteq C_G(N)$ and $G/C_G(N) = HC_G(N)/C_G(N) \cdot KC_G(N) / C_G(N) =$ 
$HC_G(N)/C_G(N) \simeq H/(C_G(N) \cap H) = H/C_H(N)$. Since $N \subseteq H$ and
$H \in \mathfrak F_{ca}$, it follows that $H/C_H(N) \in h(p)$ by 
Lemma~\ref{lem:lemma1}. By Lemma~\ref{lem:equal} we have that factor $N$ is 
$\mathfrak F$-central chief factor of $G$. Then
$G \in \mathfrak F_{ca}$, a contradiction. Let $N$ be a cyclic group. 
Then $|N| = p$ and $G / C_G(N)$ is a cyclic group of order dividing $p - 1$. 
Hence $G / C_G(N) \in \mathfrak A(p-1) \subseteq f(p) = h(p)$. Since
$G/N \in \mathfrak F_{ca}$, it follows that $G \in \mathfrak F_{ca}$, a 
contradiction.

4. Let $H \cap N = 1$ and $K \cap N = N$. This case is considered similarly to 
the case~3.                             
\end{proof}

\section{Final remarks}

Many different specific examples of composition formations containing all
supersoluble groups can be built using the concept of $ca$-$\mathfrak F$-group.
 
According to~\cite{dh}, a {\it rank function} is a map
$R \; : \; p \rightarrow R(p)$ which associates with each prime $p$ a set $R(p)$ 
of natural numbers. With each rank function $R$ we associate a class~\cite{dh}

\centerline{$\mathfrak F(R) = (G \in \mathfrak S \; | \;$ for all prime
$p \in \mathbb P$ each $p$-factor of $G$}
\centerline{has rank in $R(p))$,}

that is a formation.

If $\mathfrak F(R)$ is a saturated formation, then rank function is called a
{\it saturated} (see~\cite[p.~484]{dh}). A rank function $R$ is said to have 
{\it full characteristic} if $R(p) \neq \emptyset$ for all $p \in \mathbb P$.

Note that if $R$ is a saturated rank function of full characteristic,
by~\cite[IV, 4.3]{dh}, we have $1 \in R(p)$ for all prime $p \in \mathbb P$ and
therefore $\mathfrak U \subseteq \mathfrak F(R)$.

If a rank function $R$ is defined, then for all prime $p \in \mathbb P$
are defined~\cite{dh}

\centerline{$\pi(G) = R(p) \cap \mathbb P$ and}
\centerline{$e(p) = \{p^m - 1 \; | \; m \in R(p)\}$.}

By $\mathfrak A_{\pi(p)'}(e(p))$ we denote a class of abelian $\pi(p)'$-group
with exponent dividing $e(p)$ that is a formation.

According to~\cite{dh} the following lemma holds.

\begin{lemma}[\cite{dh}]
Let $R$ is a saturated rank function of full characteristic. Then $R$ satisfies
the following conditions

{\bf RF1:} If $n \in R(p)$ and $m \; | \; n$, then $m \in R(p)$;

{\bf RF2:} If $\{m, n\} \in R(p)$, then $mn \in R(p)$;

{\bf RF3:} If $p$ and $q$ are distinct primes with $q \in R(p)$ and if
$m \in R(p)$, then $q^m - 1 \in R(p)$;

{\bf RF4:} If $p, q \in \mathbb P$ and $r \in \mathbb N$ satisfy
the following conditions:

(i) $p \; | \; (q^m - 1) $ for some $m \in R(p)$,

(ii) $q \; | \; (p^n - 1) $ for some $n \in R(p)$,

(iii) $r \; | \; (p^k - 1) $ for some $k \in R(p)$,

(iv) $p \in R(p)$, $r \in R(p)$,

then $r \in R(p)$.
\end{lemma}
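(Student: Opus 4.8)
The statement is the necessity half of the characterisation of saturated rank functions in~\cite{dh}, and the plan is to extract each of RF1--RF4 from the saturation of $\mathfrak F(R)$ together with explicit constructions of soluble groups having prescribed chief factors. The first observation is that $\mathfrak F(R)$ is automatically a formation for \emph{every} rank function $R$: it is plainly $Q$-closed, since the chief factors of a quotient of a group $G$ occur among those of $G$; and it is $R_0$-closed because two normal subgroups of $G$ meeting trivially centralise one another, so that the $G$-chief factors lying below $N_1$ coincide, as $G$-modules, with chief factors of $G/N_2$. Hence the entire content of the lemma lies in the saturation hypothesis, and each of RF1--RF4 will be obtained by one and the same device: from the data on the left one constructs a soluble group $G$ all of whose chief factors \emph{above} $\Phi(G)$ have dimension forced by that data to lie in $R$, so that $G/\Phi(G)\in\mathfrak F(R)$, while $G$ itself carries a Frattini $p$-chief factor whose $\mathbb F_p$-dimension is exactly the natural number on the right; saturation then gives $G\in\mathfrak F(R)$, whence that dimension belongs to $R(p)$. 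One also uses the remark preceding the lemma, that full characteristic forces $1\in R(p)$ for all $p$, so that $\mathfrak U\subseteq\mathfrak F(R)$ and incidental chief factors of dimension $1$ never obstruct membership.

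Conditions RF1, RF2 and RF3, which involve at most two primes, are the warm-up cases. For RF1 one starts from $n\in R(p)$, realised by a cyclic group acting faithfully and irreducibly on an $\mathbb F_p$-space $V$ of dimension $n$ with endomorphism ring $\mathbb F_{p^n}$; given $m\mid n$ one regards $V$ over the subfield $\mathbb F_{p^m}$ and builds a \emph{non-split} extension of an $m$-dimensional $\mathbb F_p$-submodule by the relevant acting group, arranged so that modulo that submodule only chief factors of dimension $1$ and of divisors of $n$ are visible; saturation then forces $m\in R(p)$. RF2 and RF3 run on the same mechanism applied to modules obtained by tensoring the irreducible module that witnesses $m$ with one witnessing $n$ (for RF2) or with one built from a group whose order is a power of the prime $q$ (for RF3): a composition factor of the dimension demanded by the conclusion is produced and installed as a Frattini chief factor --- in the RF3 case this dimension is $q^m-1$, which occurs for instance as the $\mathbb F_p$-dimension of the augmentation ideal of $\mathbb F_p[E]$ for $E$ elementary abelian of order $q^m$ --- while RF1 is used along the way to keep the incidental chief factors within the permitted ranks.

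RF4 is the main obstacle, being the one place where all four hypotheses (i)--(iv) are invoked together. Here one must assemble a primitive-type soluble group $G$ whose chief series exhibits, in turn, a $p$-layer legitimised by the divisibility~(i), a $q$-layer legitimised by~(ii), a \emph{Frattini} $p$-layer inserted by means of the membership assumptions~(iv), and a final chief factor governed by the prime $r$ through~(iii); every \emph{visible} chief factor being then of admissible dimension, one has $G/\Phi(G)\in\mathfrak F(R)$, and saturation deposits $G$ in $\mathfrak F(R)$, which forces the dimension hidden in the Frattini layer into $R(p)$. The genuine difficulty --- and the point at which I would lean directly on~\cite[Ch.~VII]{dh} --- is the \emph{existence} of a soluble group with precisely this chief structure, the prescribed $r$-action riding on an invisible Frattini $p$-chief factor while nothing visible escapes the allowed ranks; from that reference I would also take the exact numerical bookkeeping in RF4. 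The converse implication, that RF1--RF4 in turn force $\mathfrak F(R)$ to be saturated, is likewise established in~\cite{dh} (via the characterisation of saturated formations as local formations recalled in the preliminaries), but it is not needed here, the lemma asserting only necessity.
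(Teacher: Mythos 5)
The paper offers no proof of this lemma at all: it is quoted verbatim from Doerk and Hawkes as part of their characterisation of saturated rank functions, and the authors simply preface it with ``According to~\cite{dh} the following lemma holds.'' There is therefore no argument in the paper to compare yours against; the only meaningful comparison is with the source you yourself name, and measured as a standalone proof your proposal has genuine gaps.

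The overall strategy --- build a soluble group $G$ with $G/\Phi(G)\in\mathfrak F(R)$ whose Frattini subgroup hides a $p$-chief factor of exactly the dimension you want to force into $R(p)$, then invoke saturation --- is indeed the right one, but every instance of it is asserted rather than executed. ``Installing'' a prescribed irreducible module as a chief factor inside the Frattini subgroup is precisely the content of Gasch\"utz's theorem on Frattini extensions (the relevant paper is even in this article's bibliography, though you do not invoke it); without it the phrase ``installed as a Frattini chief factor'' carries no justification. For RF1--RF3 the module-theoretic bookkeeping --- verifying that the visible chief factors of your non-split extensions all have ranks already known to lie in $R(p)$, while the hidden factor has rank exactly $m$, $mn$, or $q^m-1$ --- is the entire difficulty, and it is not carried out; the observation about the augmentation ideal of $\mathbb F_p[E]$ is the only concrete computation offered, and even there one must still check irreducibility and the dimension of the composition factor actually produced. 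For RF4 you state outright that the existence of the required group, which is the crux, is taken from the reference. In short, your text is a reasonable high-level description of how the proof in Doerk--Hawkes proceeds, but it is not itself a proof; if the intention is to cite the result, as the paper does, that is legitimate, but the intermediate constructions should then not be presented as though they were being established.
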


Local function of formation $\mathfrak F(R)$ in the case when $R$ is a saturated 
rank function is described in theorem~2.18~\cite[p.~490]{dh} which we form as 
lemma.

\begin{lemma}
Let $R$ is a rank function and let $\hat{\mathfrak F}(R)$ is a local formation
defined by local function $f$ such that
$f(p) = \mathfrak A_{\pi(p)'}(e(p))\mathfrak S_\pi(p)$ for all prime $p$.
Then any two of the following statements are equivalent:

(a) $R$ is a saturated rank function;

(b) $R$ satisfies Conditions {\bf RF1-RF4};

(c) $\hat{\mathfrak F} = \mathfrak F$.
\end{lemma}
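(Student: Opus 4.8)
The plan is to establish the cyclic chain of implications $(c)\Rightarrow(a)\Rightarrow(b)\Rightarrow(c)$, which makes any two of the three statements equivalent. The implication $(a)\Rightarrow(b)$ is already the content of the preceding lemma (stated there for a rank function of full characteristic; when $R(p)=\emptyset$ for some $p$ the relevant instances of \textbf{RF1}--\textbf{RF4} hold vacuously, so no generality is lost), and $(c)\Rightarrow(a)$ is immediate. Hence the substance of the lemma is the explicit description of the local function of $\mathfrak F(R)$, i.e.\ the implication $(b)\Rightarrow(c)$, which is Theorem~2.18 of~\cite[p.~490]{dh}; I indicate its proof below.

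For $(c)\Rightarrow(a)$: by construction $\hat{\mathfrak F}(R)=LF(f)$ is a local formation, hence a saturated formation by~\cite[IV, Theorem~4.6]{dh}. So if $\mathfrak F(R)=\hat{\mathfrak F}(R)$, then $\mathfrak F(R)$ is saturated, i.e.\ $R$ is a saturated rank function.

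For $(b)\Rightarrow(c)$ I would prove the two inclusions $\mathfrak F(R)\subseteq\hat{\mathfrak F}(R)$ and $\hat{\mathfrak F}(R)\subseteq\mathfrak F(R)$ separately, noting first that $\mathfrak A_{\pi(p)'}(e(p))\mathfrak S_{\pi(p)}\subseteq\mathfrak S$ for every $p$, so that both classes consist of soluble groups and it suffices to compare $p$-chief factors. For $\mathfrak F(R)\subseteq\hat{\mathfrak F}(R)$: take $G\in\mathfrak F(R)$, a $p$-chief factor $V=H/K$ of order $p^{m}$ with $m\in R(p)$, and set $\bar G=G/C_{G}(V)$; then $\bar G$ is a soluble group acting faithfully and irreducibly on $V$ over $\mathbb F_{p}$, so $O_{p}(\bar G)=1$ by Clifford's theorem and $F(\bar G)$ is a $p'$-group. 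Analysing this faithful irreducible action --- the largest abelian normal subgroup of $\bar G$ acts by scalars on the homogeneous components of $V$, so the degrees of the field extensions of $\mathbb F_{p}$ that arise divide numbers of the form $p^{k}-1$ with $k\in R(p)$, while the primes occurring in $\bar G$ modulo this abelian subgroup lie in $\pi(p)$ --- one reads off, using the closure conditions \textbf{RF1}--\textbf{RF4}, that $\bar G$ has a normal abelian $\pi(p)'$-subgroup of exponent dividing $e(p)$ with $\pi(p)$-quotient, that is, $\bar G\in f(p)$; hence $G\in LF(f)=\hat{\mathfrak F}(R)$. For the reverse inclusion $\hat{\mathfrak F}(R)\subseteq\mathfrak F(R)$: let $G\in\hat{\mathfrak F}(R)$ and let $V=H/K$ be a $p$-chief factor of order $p^{m}$, so $\bar G=G/C_{G}(V)\in f(p)$ acts faithfully and irreducibly on $V$ over $\mathbb F_{p}$. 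Pick a normal abelian $\pi(p)'$-subgroup $A$ of $\bar G$ of exponent dividing $e(p)$ with $\bar G/A$ a $\pi(p)$-group. By Clifford's theorem $m$ is a product of a divisor of the multiplicative order of $p$ modulo $\exp A$ --- which lies in $R(p)$ by the definition of $e(p)$ together with \textbf{RF1}, \textbf{RF2} --- and a divisor of $|\bar G/A|$, whose prime divisors lie in $\pi(p)\subseteq R(p)$; closing $R(p)$ under divisors and products by \textbf{RF1} and \textbf{RF2} gives $m\in R(p)$, so $G\in\mathfrak F(R)$. This yields $(b)\Rightarrow(c)$ and closes the cycle.

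The hard part is precisely the linear and module-theoretic bookkeeping in $(b)\Rightarrow(c)$: one must check that the dimensions of the faithful irreducible $\mathbb F_{p}$-modules for the groups in $f(p)$ are exactly the integers that \textbf{RF1}--\textbf{RF4} force into $R(p)$, and conversely that the faithful irreducible action of a soluble group on $\mathbb F_{p}^{m}$ with $m\in R(p)$ produces a group of the shape $\mathfrak A_{\pi(p)'}(e(p))\mathfrak S_{\pi(p)}$. This rests on Clifford theory together with the structure theory of soluble completely reducible linear groups, and is carried out in detail in~\cite{dh}; I would not reproduce those computations here.
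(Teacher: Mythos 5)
The paper gives no proof of this lemma at all: it is quoted verbatim as Theorem~2.18 of \cite[p.~490]{dh}, so there is no argument of the authors' own to compare yours against. Your cycle $(c)\Rightarrow(a)\Rightarrow(b)\Rightarrow(c)$ is sound --- $(c)\Rightarrow(a)$ from saturation of local formations, $(a)\Rightarrow(b)$ from the preceding lemma --- and for the one substantive implication $(b)\Rightarrow(c)$ you delegate the Clifford-theoretic computations to exactly the same source the paper cites, which matches the paper's treatment.
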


\setcounter{maintheorem}{2}
\setcounter{maincorollary}{3}

\begin{maincorollary}
Let $R$ be a saturated rank function of full characteristic and the group
$G = HK$ be the mutually permutable product of the subgroups $H$ and $K$ of $G$. 
If $G$ is a $ca$-$\mathfrak F(R)$-group, then $H$ and $K$ are also
$ca$-$\mathfrak F(R)$-groups.
\end{maincorollary}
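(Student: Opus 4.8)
The plan is to obtain this corollary as a direct application of Theorem~\ref{theorem:B}. The only thing that needs to be checked is that the class $\mathfrak F(R)$ is a saturated formation containing the class $\mathfrak U$ of supersoluble groups; once that is established, Theorem~\ref{theorem:B} applies verbatim with $\mathfrak F = \mathfrak F(R)$ and yields the conclusion.

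First I would recall that, by the discussion preceding the corollary, $\mathfrak F(R)$ is a formation for every rank function $R$, and that $R$ being a \emph{saturated} rank function means precisely that $\mathfrak F(R)$ is a saturated formation. So the hypothesis ``$R$ is a saturated rank function'' gives saturation for free. Next I would use the observation made just after the definition of full characteristic: if $R$ is a saturated rank function of full characteristic, then by~\cite[IV, 4.3]{dh} we have $1 \in R(p)$ for all primes $p$, and hence $\mathfrak U \subseteq \mathfrak F(R)$. Indeed, a supersoluble group is soluble and all of its $p$-chief factors have order $p$, i.e.\ rank $1 \in R(p)$, so every supersoluble group lies in $\mathfrak F(R)$.

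Having verified that $\mathfrak F(R)$ is a saturated formation with $\mathfrak U \subseteq \mathfrak F(R)$, I would then simply invoke Theorem~\ref{theorem:B}: since $G = HK$ is the mutually permutable product of $H$ and $K$ and $G$ is a $ca$-$\mathfrak F(R)$-group, both $H$ and $K$ are $ca$-$\mathfrak F(R)$-groups. This completes the proof.

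I do not expect any genuine obstacle here; the statement is essentially a specialization of Theorem~\ref{theorem:B} to the concrete family of saturated formations arising from saturated rank functions of full characteristic. The only point requiring a word of care is the containment $\mathfrak U \subseteq \mathfrak F(R)$, which must appeal to the fact that full characteristic together with the rank-function axioms (specifically $1 \in R(p)$ for all $p$) forces every group of rank $1$ at every prime — in particular every supersoluble group — into $\mathfrak F(R)$; this is exactly the remark recorded in the paper right after the definition of full characteristic.
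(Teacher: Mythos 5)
Your proposal is correct and follows exactly the route the paper intends: the corollary is a direct specialization of Theorem~B, justified by the two facts recorded in the final section, namely that a saturated rank function $R$ makes $\mathfrak F(R)$ a saturated formation and that full characteristic forces $1 \in R(p)$ for all $p$, whence $\mathfrak U \subseteq \mathfrak F(R)$. Nothing is missing.
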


\setcounter{maintheorem}{3}
\setcounter{maincorollary}{4}

\begin{maincorollary}
Let $R$ be a saturated rank function of full characteristic and the group 
$G = HK$ be the mutually permutable product of the
$ca$-$\mathfrak F(R)$-subgroups $H$ and $K$ of $G$. If the derived subgroup $G'$
of $G$ is quasinilpotent, then $G$ is a $ca$-$\mathfrak F(R)$-group.
\end{maincorollary}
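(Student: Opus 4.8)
The plan is to obtain this corollary as an immediate specialization of Theorem~\ref{theorem:C}, so the work reduces to checking that $\mathfrak F(R)$ satisfies the two standing hypotheses of that theorem: that it is a saturated formation and that it contains $\mathfrak U$. The first is essentially the definition together with the last lemma of the Final remarks section: since $R$ is a saturated rank function, the equivalence of (a) and (c) there gives $\mathfrak F(R) = \hat{\mathfrak F}(R)$, the local formation determined by the local function $f(p) = \mathfrak A_{\pi(p)'}(e(p))\mathfrak S_\pi(p)$, and hence $\mathfrak F(R)$ is saturated. For the second, I would invoke the remark recorded after the definition of full characteristic: because $R$ has full characteristic, $R(p) \neq \emptyset$ for every prime $p$, so by~\cite[IV, 4.3]{dh} we have $1 \in R(p)$ for all $p \in \mathbb P$, and therefore every supersoluble group belongs to $\mathfrak F(R)$, i.e. $\mathfrak U \subseteq \mathfrak F(R)$.

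With these two facts established, the hypotheses of Theorem~\ref{theorem:C} hold verbatim for $\mathfrak F := \mathfrak F(R)$: the group $G = HK$ is the mutually permutable product of $ca$-$\mathfrak F(R)$-subgroups $H$ and $K$, and $G'$ is quasinilpotent. Applying Theorem~\ref{theorem:C} then yields that $G$ is a $ca$-$\mathfrak F(R)$-group, which is exactly the assertion. There is no real obstacle here; the only point demanding a little care is the bookkeeping of the two hypotheses — in particular quoting the correct equivalence from the Final remarks section so that the saturation of $\mathfrak F(R)$ is beyond doubt — after which the corollary follows with one line of argument.
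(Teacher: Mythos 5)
Your proposal is correct and follows exactly the route the paper intends: the corollary is stated as an immediate consequence of Theorem~\ref{theorem:C} once one knows that $\mathfrak F(R)$ is a saturated formation (from the equivalence of (a) and (c) in the last lemma of the Final remarks section) containing $\mathfrak U$ (from the full-characteristic remark that $1 \in R(p)$ for all $p$). The paper gives no further argument, so your two-step verification plus the one-line application of Theorem~\ref{theorem:C} is precisely its proof.
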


\end{document}